\numberwithin{equation}{section}
\newtheorem{theorem}{Theorem}[section]
\newtheorem{lemma}[theorem]{Lemma}
\newtheorem{proposition}[theorem]{Proposition}
\newtheorem{question}[theorem]{Question}
\theoremstyle{definition}
\newtheorem{definition}[theorem]{Definition}
\newtheorem{remark}[theorem]{Remark}
\newcommand{\Extend}[5]{\ext@arrow0099{\arrowfill@#1#2#3}{#4}{#5}}
\DeclareMathOperator{\Lip}{Lip}
\DeclareMathOperator{\dist}{dist}
\DeclareMathOperator{\diam}{diam}
\newcommand{\ve}{\varepsilon}
\newcommand{\ti}[1]{\tilde{#1}}
\newcommand{\de}{\partial}
\begin{document}

\title[Interior control]{Interior control for surfaces with positive scalar curvature and its application}

\author[S. Chen]{Shuli Chen}
\address[Shuli Chen]{Department of Mathematics, University of Chicago, 5734 S University Ave, Chicago IL, 60637, United States}
\email{shulichen@uchicago.edu}

\author[J. Chu]{Jianchun Chu}
\address[Jianchun Chu]{School of Mathematical Sciences, Peking University, Yiheyuan Road 5, Beijing 100871, People's Republic of China}
\email{jianchunchu@math.pku.edu.cn}

\author[J. Zhu]{Jintian Zhu}
\address[Jintian Zhu]{Institute for Theoretical Sciences, Westlake University, 600 Dunyu Road, Hangzhou, Zhejiang 310030, People's Republic of China}
\email{zhujintian@westlake.edu.cn}

\renewcommand{\subjclassname}{\textup{2020} Mathematics Subject Classification}
\subjclass[2020]{Primary 53C21; Secondary 53A10}

\begin{abstract}
Let $M^{n}$, $n\in\{3,4,5\}$, be a closed aspherical $n$-manifold and $S\subset M$ a subset consisting of disjoint incompressible embedded closed aspherical submanifolds (possibly with different dimensions). When $n =3,4$, we show that $M\setminus S$ cannot admit any complete metric with positive scalar curvature. When $n=5$, we obtain the same result when $S$ contains a submanifold of codimension 1 or 2. The key ingredient is a new interior control for the extrinsic diameter of surfaces with positive scalar curvature.
\end{abstract}

\maketitle

\section{Introduction}\label{Introduction}
Scalar curvature is the weakest curvature invariant of a Riemannian metric, but the existence of metrics with positive scalar curvature
still places strong topological constraints on the underlying manifolds. The well-known
aspherical conjecture asserts that closed aspherical manifolds cannot
admit any smooth metric with positive scalar curvature (see \cite{Ros84,SY87,Gro86}). Recall that a manifold is called \emph{aspherical} if it has contractible universal cover (or equivalently, if the homotopy group $\pi_i$ vanishes for all $i\geq 2$).
Aspherical manifolds are also known as $K(\pi,1)$-spaces or Eilenberg--MacLane spaces in topology.

So far, there have been many progresses on the aspherical conjecture in low dimensions. The two dimensional case follows directly from the Gauss--Bonnet formula combined with the classification of closed surfaces. In 1983, Gromov and Lawson \cite{GL83} verified the three dimensional case with the help of some filling radius estimate. Closely related, it is worth mentioning that Schoen and Yau \cite{SY79c} previously obtained a fundamental group obstruction for 3-manifolds with positive scalar curvature, which in fact can be also used to confirm the three dimensional aspherical conjecture, combined with the final resolution of the virtually Haken conjecture \cite{A13}. In dimension four, the special case with non-zero first Betti number was first confirmed by Wang \cite{Wang19}. We note that, as pointed out in \cite{Wang19}, there exist infinitely many closed aspherical 4-manifolds that are homology 4-spheres \cite{RT05}, which have zero first Betti number. The general case in dimension four was recently established by Chodosh and Li \cite{CL20} based on a previous outline from Schoen and Yau \cite{SY87}. In dimension five, the aspherical conjecture was recently confirmed independently by Chodosh and Li in the same paper \cite{CL20} and also by Gromov in his paper \cite{Gro20} based on the recent development of Gromov's $\mu$-bubble method \cite{Gro19}. Shortly after that, Chodosh, Li and Liokumovich \cite{CLL23} improved the aspherical conjecture into a mapping version in dimensions four and five. Despite these progresses up to dimension five, the aspherical conjecture is still widely open in dimensions greater than five.

With the help of the $\mu$-bubble
method, there have been various efforts to generalize known topological obstructions for positive scalar curvature on closed manifolds to those on non-compact complete manifolds, where the situations are more complicated since the scalar curvature may decay to zero at infinity.
For results on non-compact manifolds obtained from (generalized) connected sum operations from $n$-torus (or even closed Scheon--Yau--Schick manifolds), the readers can refer to \cite{CL20,LUY20,CLSZ21,Chen24}. When the underlying manifold is only aspherical, due to the lack of nonzero homology classes, the $\mu$-bubble method cannot be applied in a direct way to overcome the issue of non-compactness. As a result, not many results are known for non-compact manifolds constructed from closed aspherical ones. In particular,
Gromov \cite[p. 151]{Gro19} asked the following start-up question:
\begin{question}
Are there complete metrics with positive scalar curvature on closed aspherical manifolds with punctures of dimension $4$ and $5$?
\end{question}
In the previous work \cite{CCZ23}, we are able to give a negative answer to Gromov's question. More generally, we considered the question whether there is any complete metric with positive scalar curvature on connected sums of non-compact manifolds with closed aspherical ones, and we proved the following:

\begin{theorem}[{Theorem 1.2 of \cite{CCZ23}}]\label{Thm: connected sum}
Let $N^n$, $n\in\{3,4,5\}$, be a closed aspherical manifold and let $X^n$ be an arbitrary $n$-manifold. Then the connected sum $Y= N \# X$ admits no complete metric with positive scalar curvature.
\end{theorem}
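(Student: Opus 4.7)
The plan is to assume for contradiction that $Y = N \# X$ admits a complete metric $g$ of positive scalar curvature, and then distill from $(Y,g)$ a \emph{closed} aspherical $n$-manifold with positive scalar curvature, contradicting the aspherical conjecture in dimensions $n\in\{3,4,5\}$. First I would fix the connected sum decomposition $Y = N' \cup_{\Sigma_0} X'$, where $N' = N \setminus \mathring{B}^n$ is an aspherical manifold with boundary $\Sigma_0 \cong S^{n-1}$. The naive move of restricting $g$ to $N'$ and capping off fails: the neck sphere $\Sigma_0$ is not intrinsically defined and its geometry in $(Y,g)$ is a priori uncontrolled. The task is therefore to replace $\Sigma_0$ by a geometrically well-behaved separating hypersurface $\Sigma$ and then cap off its $X'$-side by a PSC filling without disturbing positive scalar curvature.

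The main tool I would use is Gromov's $\mu$-bubble method. Choose a smooth proper function $\rho \colon Y \to \mathbb{R}$ and a warping function $h$ that blows up on two level sets of $\rho$ bracketing $\Sigma_0$, so that a minimizer of the weighted area functional
\[
\mathcal{A}(\Omega) = \mathcal{H}^{n-1}(\partial^\ast \Omega) - \int_\Omega h\, d\mathcal{H}^n
\]
exists within the bracket and is forced by barrier arguments to avoid the blow-up sets. The resulting stable $\mu$-bubble $\Sigma$ inherits a conformal metric of positive scalar curvature via the standard second-variation computation, and it still separates the aspherical piece of $N'$ from the far end of $X'$ because it is homologous to $\Sigma_0$ inside the bracket. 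I would then perform Gromov-Lawson type surgery on the $X'$-side of $\Sigma$, replacing it by a standard PSC cap to produce a closed manifold $\hat N$ with positive scalar curvature whose fundamental and higher homotopy groups (by van Kampen and the long exact sequence of a pair) agree with those of $N$, so that $\hat N$ is closed aspherical.

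The main obstacle will be dimensions $n = 4$ and $n = 5$, where the $\mu$-bubble $\Sigma$ is $3$- or $4$-dimensional and does not itself violate any topological PSC obstruction. One must either iterate the bubble construction or employ the warped-product slicing introduced by Chodosh-Li, descending eventually to a $2$-dimensional slice on which asphericity forces a Gauss-Bonnet contradiction; and in dimension $5$ one must additionally handle the regularity of minimizing hypersurfaces right at the threshold where singularities first appear. Throughout this descent the delicate bookkeeping is topological: each slice must retain enough essential homotopical information from $N$ so that at the bottom of the induction one still has a nonzero class obstructing positive scalar curvature.
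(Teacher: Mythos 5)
Your proposal takes a genuinely different route from the paper, but it contains a gap that sinks the central step. The paper does not try to ``distill'' a closed aspherical PSC manifold from $Y$. Instead it passes to the cover $\tilde Y = \tilde N \#_{\pi_1(N)} X$ (using that the universal cover $\tilde N$ is contractible), establishes the vanishing $H_{n-1}(\tilde Y, \tilde X_\ve) = 0$ together with a quantitative filling property for relative cycles, and then runs a chain-closing program: cut the boundary of a tubular neighborhood of a proper line $\tilde\sigma$, solve a Plateau problem, produce a codimension-two submanifold $M_{n-2}$ via $\mu$-bubbles with diameter control, and finally fill $M_{n-2}$ by a relative chain to produce a nontrivial class in $H_{n-1}(\tilde Y,\tilde X_\ve)$, contradicting the vanishing. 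Everything happens in a non-compact cover, in a fundamentally relative (homological) framework.

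The decisive gap in your plan is the capping-off step. A stable $\mu$-bubble $\Sigma$ produced in a band around the neck sphere inherits stabilized positive scalar curvature, but its \emph{topology is completely uncontrolled}: there is no reason it should be a sphere, or bound anything on the $X'$-side that admits a positive-scalar-curvature filling. ``Replace the $X'$-side by a standard PSC cap'' is only available when $\Sigma$ bounds a ball (or a region built by codimension-$\geq 3$ handles), and nothing in the $\mu$-bubble existence theory gives that. Even granting a spherical $\Sigma$, the region swept out between $\Sigma_0$ and $\Sigma$ lies inside $X'$ and can carry arbitrary topology coming from $X$, so the capped-off manifold $\hat N$ need not be diffeomorphic to $N$, nor aspherical; your van Kampen/long-exact-sequence argument has no control over that intermediate region. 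This is precisely the difficulty that forces the paper away from a ``reduce to the closed aspherical conjecture'' argument and into the relative-homology filling strategy: the filling property controls the \emph{size} of the filling chain, not the topology of any hypersurface. Finally, the aside about dimension $5$ being ``the threshold where singularities first appear'' is off: area-minimizing hypersurfaces in ambient dimension $\leq 7$ are smooth, so there is no regularity obstruction in the range $n\in\{3,4,5\}$; the genuine difficulty in dimension $5$ is topological bookkeeping in the slice-and-dice step, which the paper handles through diameter estimates on the slicing and dicing pieces, not through capping.
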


In this paper, we want to generalize Gromov's start-up question by removing other submanifolds beyond punctures. Namely, we consider the following question:

\begin{question}\label{Que: main_original}
Let $M^n$ be a closed aspherical $n$-manifold and $S\subset M$ a closed subset.
For which $S$ can we guarantee that $M\setminus S$ cannot admit any complete metric with positive scalar curvature?
\end{question}

We note that there are several previous results related to this question. In \cite{GL83}, Gromov and Lawson showed that manifolds of the form $\mathbb{T}^n \setminus \mathbb{T}^k$ for $0 \le k < n$ and $n \ge 2$ do not carry a complete metric with positive scalar curvature. In \cite{SWWZ24}, Shi, Wang, Wu, and the third-named author showed that manifolds of the form $M\setminus \Gamma$ do not carry a complete metric with positive scalar curvature, where $M^n$, $3\le n \le 7$ is a closed Schoen--Yau--Schick manifold and $\Gamma$ is a submanifold such that either $\dim\Gamma \le 1$ or the first Betti number $b_1(\Gamma) \le n - 3$. We also note that Question \ref{Que: main_original} is also closely related to the Schoen conjecture regarding uniformly Euclidean ($L^\infty$) metrics up to a blow-up trick \cite{LM19}.

The work in this paper can be considered as a generalization of the above-mentioned result by Gromov and Lawson in \cite{GL83}. In our view, the key feature of the pair $(\mathbb T^n,\mathbb T^k)$ is that the inclusion map $\pi_1(\mathbb T^k)\to \pi_1(\mathbb T^n)$ is injective. Recall that if a submanifold $N$ in $M$ has injective map $\pi_1(N)\to\pi_1(M)$, it is called incompressible in $M$. Under such consideration, we restrict our attention to the case when $S$ consists of incompressible submanifolds, and we propose the following
\begin{question}\label{Que: main}
Let $M^n$ be a closed aspherical $n$-manifold and $S\subset M$ a subset consisting of disjoint incompressible embedded closed aspherical submanifolds (possibly with different dimensions).
Can $M\setminus S$ admit any complete metric with positive scalar curvature?
\end{question}

In this paper, we fully answer Question \ref{Que: main} for $n=3$ and $4$. Namely, we are able to show

\begin{theorem}\label{Thm: main}
Let $M^n$, $n=3,4$, be a closed aspherical $n$-manifold and $S\subset M$ a subset consisting of disjoint incompressible embedded closed aspherical  submanifolds (possibly with different dimensions). Then $M\setminus S$ cannot admit any complete metric with positive scalar curvature. Moreover, if $g$ is a complete smooth metric on $M\setminus S$ with nonnegative scalar curvature, then $g$ is flat, $S$ must consist of hypersurfaces only, and there is a finite cover $(\tilde M,\tilde S)$ of the pair $(M,S)$ such that $\tilde M\setminus \tilde S$ consists of cylinders $\mathbb T^{n-1}\times \mathbb R$.
\end{theorem}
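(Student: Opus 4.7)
I argue by contradiction, first supposing $g$ is a complete metric on $M\setminus S$ with $R_g > 0$ somewhere; the nonnegative/rigidity case is then handled via the same construction in its equality regime. Since $M$ is aspherical, its universal cover $\widehat M$ is contractible; let $\widehat S = \pi^{-1}(S)\subset \widehat M$. Incompressibility and asphericity of each component of $S$ ensure that every connected component of $\widehat S$ is the universal cover of the corresponding component of $S$, hence contractible. Consequently $\widehat M \setminus \widehat S$ inherits a complete lifted metric $\widehat g$ with the same scalar curvature lower bound, and $\widehat M\setminus\widehat S\to M\setminus S$ is a covering whose ambient space is contractible and whose removed set is a locally finite collection of contractible incompressible submanifolds.

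Next I perform a $\mu$-bubble dimension reduction to produce a PSC surface $\Sigma^2\subset \widehat M\setminus \widehat S$. Choose a base point $p$ far from $\widehat S$ and an exhaustion by regions $\Omega_R$. In dimension $n=3$ I build a stable $\mu$-bubble $\Sigma^2\subset\Omega_R$ whose prescribed mean curvature function penalizes proximity to $\widehat S\cup\partial\Omega_R$; in dimension $n=4$ I first construct a stable $\mu$-bubble hypersurface $\Sigma^3$ and then slice further, via a second $\mu$-bubble or a Schoen--Yau inductive descent, to reach a closed surface $\Sigma^2$ with induced positive scalar curvature. I then invoke the new interior control: each connected component of $\Sigma^2$ has extrinsic diameter in $\widehat g$ bounded by a universal constant $C$ depending only on the ambient scalar curvature lower bound and the $\mu$-bubble width parameter, independent of how close $\Sigma^2$ comes to $\widehat S$. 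On the other hand, contractibility of $\widehat M$ combined with the placement of $p$ deep inside $\Omega_R$ forces some connected component of $\Sigma^2$ to separate $p$ from $\partial \Omega_R$ in $\widehat M$, and hence to have extrinsic diameter at least of order $R$; letting $R\to \infty$ contradicts the interior control and closes the strict PSC case.

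For the rigidity statement, assume $R_g\geq 0$ and the strict argument fails. Then every stable $\mu$-bubble, and every intermediate slice in the dimension reduction, must saturate the stability inequality throughout. In this equality regime, Fischer--Colbrie--Schoen rigidity together with Gromov's toroidal band rigidity forces $\Sigma^2$ to be a flat torus and the ambient neighborhood to split as a Riemannian product over $\Sigma^2$. Propagating this rigid splitting along a foliation of $\widehat M\setminus \widehat S$ by parallel copies of $\Sigma^2$ shows that $\widehat g$, and hence $g$, is flat; reorganizing the splitting under the deck group of a suitable finite cover identifies $(\overline M, \overline S)$ as a disjoint union of cylinders $\mathbb T^{n-1}\times \mathbb R$. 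The principal obstacle I expect is controlling the $\mu$-bubble uniformly near $\widehat S$: components of $\widehat S$ of different codimensions produce geometrically distinct end behaviors, and the payoff of the new interior control is that it delivers a single diameter bound $C$ across all these regimes. Designing the prescribed mean curvature function so that the $\mu$-bubble exists, is stable, and admits the interior control is the heart of the argument.
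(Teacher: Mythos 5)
Your set-up is right (pass to the contractible universal cover $\widehat M$, note that every component of $\widehat S$ is contractible, do a $\mu$-bubble dimension reduction to reach a surface, then apply the new interior diameter control), and the role you assign to Proposition~\ref{Prop: interior} is exactly what the paper has in mind. But the mechanism you propose for reaching a contradiction does not work.

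You claim that contractibility of $\widehat M$ forces some component of $\Sigma^2$ to separate $p$ from $\partial\Omega_R$, and that this forces extrinsic diameter of order $R$. Both steps fail. In $n=4$, $\Sigma^2$ has codimension $2$ in $\widehat M$ and cannot separate anything. In $n=3$, even if $\Sigma^2$ does separate, a separating closed surface in a contractible manifold can be arbitrarily small: a tiny geodesic sphere around $p$ already separates $p$ from $\partial\Omega_R$. So boundedness of diameter contradicts nothing in your picture. The paper's contradiction is purely homological and requires more structure. It fixes a proper line $\widetilde\sigma$ lifting a non-contractible loop $\sigma$ disjoint from $S$, constructs a hypersurface $M_{n-1}$ with boundary having \emph{non-zero intersection number with $\widetilde\sigma$}, applies the $\mu$-bubble method along $\partial M_{n-1}$ to obtain the $\mathbb{T}^*$-stabilized PSC surface, uses Proposition~\ref{Prop: interior} to bound the extrinsic diameter of every component of that surface inside $\widetilde M\setminus\widetilde S_\ve$, and then invokes the quantitative filling lemma from \cite{CCZ23} to close the chain into a relative cycle of bounded support. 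Because $H_{n-1}(\widetilde M,\widetilde S_\ve)=0$ (and the map $H_{n-1}(\widetilde M\setminus\widetilde S)\to H_{n-1}(\widetilde M,\widetilde S_\ve)$ is zero), this cycle must be trivial, yet it has non-zero intersection number with $\widetilde\sigma$ --- contradiction. The linking with $\widetilde\sigma$ and the filling step are the two ingredients your proposal omits, and without them the diameter bound has nothing to push against.

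A second, smaller point: your rigidity sketch (saturating the stability inequality, Fischer--Colbrie--Schoen, toroidal band rigidity) is not what the paper does, and it is sketchy precisely where things get delicate. The paper first invokes Kazdan's theorem to reduce the nonnegative case to the complete Ricci-flat case, then uses the Cheeger--Gromoll splitting theorem to analyze the ends, a careful topological argument (involving sphere bundles and covering space considerations) to show $S$ must consist solely of hypersurfaces, a double-cover trick for one-ended components, and finally Bieberbach rigidity to identify the fibers as flat tori. Your "propagate the splitting" step is essentially the Cheeger--Gromoll step in disguise, but you would still need the topological analysis to rule out components of $S$ of codimension $\ge 2$ and to produce the finite cover where everything becomes $\mathbb T^{n-1}\times\mathbb R$; that part is missing.
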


Throughout this paper, we call $(\tilde M,\tilde S)$ a cover of the pair $(M,S)$ if $\tilde M$ is a cover of $M$ associated with the covering map $\pi:\tilde M\to M$ and $\tilde S=\pi^{-1}(S)$. If $\tilde M$ is a finite cover of $M$, then we call $(\tilde M,\tilde S)$ a finite cover of $(M,S)$.

For $n=5$, we can only give a partial answer to Question \ref{Que: main} when $S$ contains a submanifold with codimension $1$ or $2$.

\begin{theorem}\label{Thm: 5d}
Let $M$ be a closed aspherical $5$-manifold and $S\subset M$ a subset consisting of disjoint incompressible closed aspherical embedded submanifolds (possibly with different dimensions).
If $S$ contains a submanifold of codimension 1 or 2, then $M\setminus S$ cannot admit any complete metric with positive scalar curvature. Moreover, if $g$ is a complete smooth metric on $M\setminus S$ with nonnegative scalar curvature, then $g$ is flat, $S$ must consist of hypersurfaces only, and there is a finite cover $(\tilde M,\tilde S)$ of the pair $(M,S)$ such that $\tilde M\setminus \tilde S$ consists of cylinders $\mathbb T^{n-1}\times \mathbb R$.
\end{theorem}
The strategy for the proof of the above theorems essentially follows from the one developed in \cite{CCZ23}, except that some necessary modifications will be made to obtain extrinsic diameter estimates. In particular, the following interior control of the extrinsic diameter plays a key role when $n=3$ and $4$. 

\begin{proposition}\label{Prop: interior}
     Given a positive and continuous function $L:[0,+\infty)\to (0,+\infty)$ there is a positive function $K:(0,+\infty)\to (0,+\infty)$ such that the following property holds: for any $s_{0}>0$, if $(\Sigma,g,\rho)$ is a triple  satisfying
    \begin{itemize}\setlength{\itemsep}{1mm}
        \item $(\Sigma,g)$ is a connected Riemannian surface with non-empty boundary and $\rho:\Sigma\to[0,+\infty)$ is a smooth and proper function with $\Lip\rho <1$;
        \item

        $\rho>K(s_0)$ on $\partial\Sigma$;

        \item  $(\Sigma,g)$ has $\mathbb{T}^*$-stabilized scalar curvature $R\geq L\circ \rho$ in $\Sigma_{K(s_0)}$, where $\mathbb{T}^*$-stabilized scalar curvature is defined in Definition \ref{Defn: stabilized scalar curvature} and we use the notation $\Sigma_s:=\rho^{-1}([0,s])$,
    \end{itemize}
    then we can find finitely many pairwise disjoint disks $\{D_j\}$ in $\Sigma_{K(s_0)}$ such that
    $$\Sigma_{s_0}\subset\bigcup_j D_j$$
    and for each connected component $\Sigma_{s_0}^{o}$ of $\Sigma_{s_0}$, the extrinsic diameter of $\Sigma_{s_0}^{o}$ in $(\Sigma_{K(s_0)},g)$ is bounded by
   $$ 4\pi \left(\min_{[0,K(s_0)]}L\right)^{-\frac{1}{2}}.$$
\end{proposition}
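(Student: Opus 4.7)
The plan is to enclose each connected component $\Sigma_{s_0}^o$ of $\Sigma_{s_0}$ by a stable one-dimensional warped $\mu$-bubble curve $\gamma$ lying in the slab $\{s_0<\rho<K(s_0)\}$, and then to convert the stability of $\gamma$ together with the scalar curvature lower bound into an intrinsic diameter bound on the topological disk $D$ that it bounds. This is a surface adaptation of Gromov's $\mu$-bubble scheme \cite{Gro19} as employed in \cite{CL20,CCZ23}.

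First I would choose $K(s_0)$ large enough that the slab $\{s_0<\rho<K(s_0)\}$ has $\rho$-height on the order of $(\min_{[0,K(s_0)]}L)^{-1/2}$; since $L$ is continuous this can be arranged by a straightforward monotonicity argument. Writing $L_{\min}:=\min_{[0,K(s_0)]}L$, I would then pick a smooth warping function $h:(s_0,K(s_0))\to\mathbb{R}$ of the form $h(\rho)=\alpha\tan(\alpha(\bar{\rho}-\rho))$, where $\bar{\rho}$ is the midpoint of the slab and $\alpha$ is tuned (with $\alpha\le\sqrt{L_{\min}/2}$) so that $h\to\pm\infty$ at the two faces and $h$ satisfies the ODE $h'=-\alpha^{2}-h^{2}$. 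The blow-up at the faces forces any minimizer of
$$\mathcal{A}_{h}(\Omega)=\text{length}(\partial^{*}\Omega)-\int_{\Omega}h(\rho)\,dA$$
over Caccioppoli sets $\Omega\supset\Sigma_{s_{0}}^{o}$ contained in $\Sigma_{K(s_{0})}$ to have its free boundary $\gamma$ strictly inside the open slab.

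Next I would extract the stable $\mu$-bubble curve $\gamma$. It is smooth (one-dimensional regularity is elementary), satisfies $\kappa_{g}=h(\rho)$ along it, and obeys the stability inequality
$$\int_{\gamma}(\varphi')^{2}\,ds\ge\int_{\gamma}\bigl(K_{\Sigma}+h^{2}+h'(\rho)\langle\nabla\rho,\nu\rangle\bigr)\varphi^{2}\,ds$$
for every $\varphi\in C^{\infty}(\gamma)$. Using $K_{\Sigma}=R/2\ge L(\rho)/2\ge L_{\min}/2$ (the surface case of the $\mathbb{T}^{*}$-stabilized scalar curvature hypothesis, after absorbing any auxiliary torus factor), the bound $|\langle\nabla\rho,\nu\rangle|\le\Lip(\rho)<1$, and the ODE for $h$, the integrand on the right is bounded below by a positive multiple of $L_{\min}$. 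Combined with the Dirichlet eigenvalue estimate $\int(\varphi')^{2}\ge(\pi^{2}/\ell^{2})\int\varphi^{2}$ on each component of $\gamma$ of length $\ell$, this forces $\ell\le 2\pi L_{\min}^{-1/2}$. Moreover, the bounded pieces of $\Sigma\setminus\gamma$ intersecting $\Sigma_{s_{0}}^{o}$ must be topological disks, since any handle could be eliminated by a filling operation strictly decreasing $\mathcal{A}_{h}$.

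The last step is to upgrade this to the stated intrinsic diameter bound on each disk $D_{j}$. On $D_{j}$ we have $K_{\Sigma}\ge L_{\min}/2>0$, so a standard conjugate-point analysis along radial geodesics from any interior point shows that geodesics cease to be minimizing before length $\pi\sqrt{2}/\sqrt{L_{\min}}$; combined with the length bound on $\partial D_{j}$ this upgrades to the intrinsic diameter bound $4\pi L_{\min}^{-1/2}$. Since $\Sigma_{s_{0}}^{o}\subset D_{j}\subset\Sigma_{K(s_{0})}$, this dominates the extrinsic diameter in $(\Sigma_{K(s_{0})},g)$, yielding the stated estimate; disjointness of $\{D_{j}\}$ is ensured by localizing each $\mu$-bubble minimization to a disjoint open neighborhood of its component. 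The main obstacle I foresee is this final step: a topological disk with positive Gauss curvature need not be geodesically convex, so ruling out long thin configurations relies on the \emph{quantitative} length control of $\partial D_{j}$ from stability, not on the curvature bound alone, and the precise tuning of $\alpha$ and $h$ is needed to extract the exact constant $4\pi$ rather than a slightly weaker variant.
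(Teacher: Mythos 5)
There is a genuine gap, and it is the step you yourself flagged as the one you are least sure of. The hypothesis ``$(\Sigma,g)$ has $\mathbb{T}^{*}$-stabilized scalar curvature $R\geq L\circ\rho$'' means only that there exist positive warping functions $v_{1},\ldots,v_{l}$ so that the scalar curvature of $(\Sigma\times\mathbb{T}^{l},\,g+\sum_i v_i^2\,d\theta_i^2)$ is bounded below by $L\circ\rho$. By the warped-product curvature formula this is
\[
R \;=\; 2K_{\Sigma}\;-\;2\sum_{i}\frac{\Delta v_{i}}{v_{i}}\;-\;\sum_{i\neq j}\Big\langle\frac{\nabla v_{i}}{v_{i}},\frac{\nabla v_{j}}{v_{j}}\Big\rangle,
\]
and there is no sign control on the extra terms. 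So your claim $K_{\Sigma}=R/2\geq L_{\min}/2$ (``after absorbing any auxiliary torus factor'') does not follow; the intrinsic Gauss curvature of $\Sigma$ can be arbitrarily negative even when the stabilized scalar curvature is large. This breaks both the stability inequality as you wrote it (which should feature the full stabilized quantity, not $K_{\Sigma}$) and, fatally, the final conjugate-point step, which needs the pointwise Gauss curvature bound. Without that step you never convert ``$\gamma$ has length $\lesssim L_{\min}^{-1/2}$'' into a bound on $\operatorname{diam}\Sigma_{s_0}^o$, because, as you correctly worry, a disk with short boundary is not geodesically convex and can contain points very far apart. The ``handles can be filled to decrease $\mathcal{A}_h$'' claim for the topology is also not a standard move in one dimension and is left unjustified.

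The paper takes a structurally different and more robust route that never tries to extract a pointwise curvature bound. The topology (each component of $\Sigma_{s_1^*}$ is a punctured sphere, each circle in $\rho^{-1}(s_0^*)$ bounds a disk) is established via explicit Riemannian-band arguments and the width estimate of Lemma~\ref{Lem: D estimate}, which works directly at the level of $\mathbb{T}^{*}$-stabilized scalar curvature. The diameter bound is then proved by contradiction: if two points $p_{\pm}\in\Sigma_{s_0}^o$ were farther apart than $4\pi L_{\min}^{-1/2}$, one forms a band between small balls around them, applies the free-boundary $\mu$-bubble Lemma~\ref{Lem: free boundary} to get a curve $\gamma$ which \emph{inherits} a positive $\mathbb{T}^{l+1}$-stabilized scalar curvature bound, and then either $\gamma$ is a closed circle with positive $\mathbb{T}^{*}$-stabilized scalar curvature (impossible), or it must cross $\Sigma_{s_0}$ and run out to $\rho^{-1}(s')$, giving an arc whose $\rho$-range is too long for the band-width estimate. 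The key technical device you are missing is the inheritance of stabilized scalar curvature along the $\mu$-bubble, which replaces the pointwise Gauss curvature bound your proposal relies on.
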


Roughly speaking, we can show that Riemannian surfaces with curvature decay not too fast (bounded from below by a positive function $L$) will have nice extrinsic diameter estimates for interior components independent of the curvature far away (measured by a shielding function $K$). See Figure \ref{Fig: 1} below for the illustration.

\begin{figure}[htbp]
\centering
\includegraphics[width=11cm]{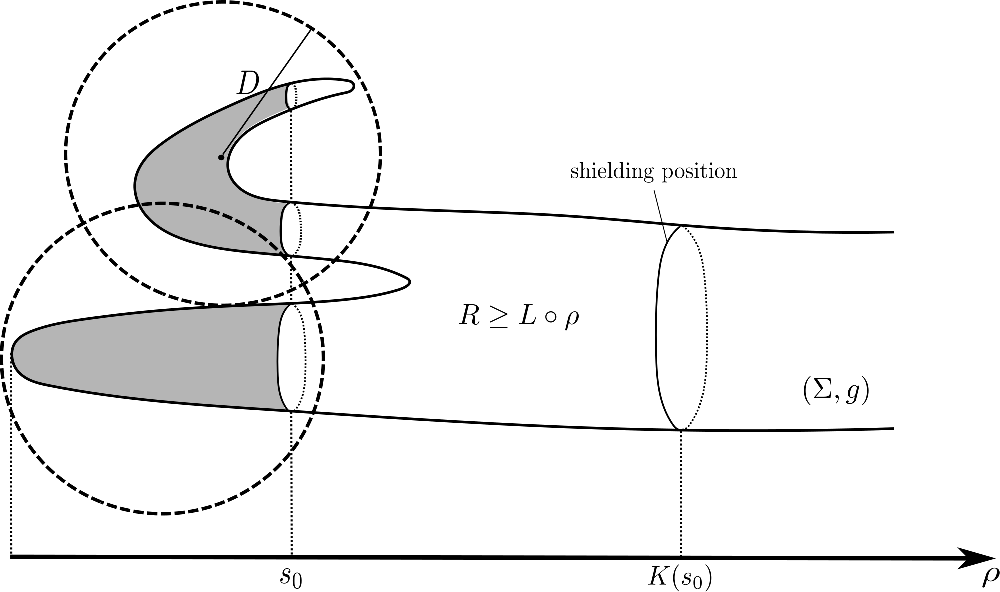}
\caption{Let $(\Sigma,g)$ be a complete Riemannian surface and $\rho$ be a Lipschitz function on $\Sigma$ with $\Lip\rho <1$. If the $\mathbb{T}^*$-stabilized scalar curvature of $\Sigma$ satisfies $R\geq L\circ\rho$ in region $\{\rho \leq K(s_0)\}$, then each component of the region $\{\rho\leq s_0\}$ is contained in some geodesic $D$-ball of $(\Sigma,g)$, where $D$ depends only on the infimum value of $L$ on $[0,K(s_0)]$.}
\label{Fig: 1}
\end{figure}

\begin{remark}
We would like to point out that the topological conclusion (being contained in disjoint disks) of Proposition \ref{Prop: interior} is not used in the proof of Theorem \ref{Thm: main} and \ref{Thm: 5d}. We just include it for completeness and independent interest.
\end{remark}

\subsection{Outline of the proof}
In this subsection, we shall outline the proof for Theorem \ref{Thm: main} and Theorem \ref{Thm: 5d}. For readers' convenience, we first recall the previous strategy for Theorem \ref{Thm: connected sum} from \cite{CCZ23}, and then introduce necessary modifications to be made for Theorem \ref{Thm: main} and Theorem \ref{Thm: 5d}.
\subsubsection{The previous strategy for Theorem \ref{Thm: connected sum}}

First let us briefly recall the strategy developed in \cite{CCZ23} for the proof of Theorem \ref{Thm: connected sum}, which is essentially a relative version of the chain-closing program first proposed in \cite{SY87} and further developed in \cite{CL20}.

For $n \in \{3,4,5\}$, we consider the non-compact connected sum $N\#X$,
where $N$ is a closed aspherical $n$-manifold, and $X$ denotes an arbitrary non-compact $n$-manifold. For simplicity, we denote $N\#X$ by $Y$. Throughout this paper, all homology groups are assumed to have $\mathbb{Z}$ coefficients.

Suppose that $g$ is a complete metric of positive scalar curvature on $Y = N \# X$, then we aim to derive a contradiction. Since $N$ has a contractible universal cover $\tilde N$, we can consider the corresponding cover $\tilde Y=\tilde N\#_{\pi_1(N)}X$ of $Y$. By definition the connected sum  $\tilde Y=\tilde N\#_{\pi_1(N)}X$ induces a natural decomposition $$
\tilde Y=\tilde N_{\ve}\cup \ti X_{\ve}$$
in the following way: for each $X$ we take away a small ball and label it by $\tilde X_{\ve, i}$ with index $i\in \pi_1(N)$, then we can take the union
$$\tilde X_{\ve}=\left(\bigcup_i \tilde X_{\ve,i}\right)$$
such that the complement $\tilde N_{\ve}$ of $\tilde X_{\ve}$ in $\tilde Y$ is diffeomorphic to
$$
\tilde N-\left(\bigcup_i B_i\right).
$$
It is clear that we can guarantee that both $\tilde N_{\ve}$ and $\tilde X_{\ve}$ are $\pi_1(N)$-invariant under the deck transformation of the covering $\tilde Y\to Y$.

The whole proof of Theorem \ref{Thm: connected sum} relies on the following two topological properties of $\tilde Y$:
\begin{itemize}\setlength{\itemsep}{1mm}
\item (Homological property) From the contractibility of $\tilde N$, one can show that the relative homology group $H_k(\tilde Y, \ti X_{\ve})$ is zero for $k = n-1$ and the map $H_k(\tilde Y) \to H_k(\tilde Y, \ti X_{\ve})$ is zero for all $k$.
\item (Quantitative filling property) Since $\ti N_\ve$ is the cover of a compact manifold, for any $k\in \mathbb N_+$ there is a function $F:(0,+\infty)\to (0,+\infty)$ depending only on the triple $(Y, N_\ve, g)$ such that if $C$ is a relative $k$-boundary in $\mathcal B_k(\ti Y, \ti X_{\ve})$ contained in some geodesic ball $B_{r}^{\ti{g}}(\tilde q)$ of $(\ti Y,\tilde g)$ with $\tilde q\in \tilde N_\ve$, then one can find a relative $(k+1)$-chain $\Gamma \in \mathcal Z_{k+1}(\ti Y,  \ti X_{\ve})$ contained in the geodesic ball $B_{F(r)}^{\ti{g}}(\tilde q)$ of $(\tilde Y,\tilde g)$ with $\partial\Gamma=C$ modulo $ \ti X_{\ve}$.
\end{itemize}

The basic idea is to complete a chain-closing program in $\tilde Y$. Namely, we fix a proper line $\tilde \sigma$ in $\tilde{N}_\ve$ and take the boundary of a tubular neighborhood $\mathcal N$ of $\tilde \sigma|_{[0,+\infty)}$. From our construction $\partial \mathcal N$ is a locally-finite relative chain having non-zero intersection number with $\tilde\sigma$. Due to the homology property of $\tilde Y$, if one can close this chain without creating further intersections, then the newly obtained relative cycle represents a non-trivial relative homology class in $H_{n-1}(\tilde Y, \ti X_{\ve})$, which leads to a desired contradiction.

To be more precise, through cutting $\partial \mathcal N$ at a finite length we can obtain a hypersurface $\tilde M_{n-1}$ with boundary in $\tilde Y$, which has non-zero algebraic intersection with $\tilde\sigma$.
In this step, we use the fact $H_{n-1}(\tilde Y, \ti X_{\ve}) = 0$. Let $M_{n-1}$ be the minimizing hypersurface obtained from solving the Plateau problem with the prescribed boundary $\partial\tilde M_{n-1}$. After using $\mu$-bubble method we can further construct a codimension-two closed submanifold $M_{n-2}$ in $\tilde Y$. Through adjusting the size of the tubular neighborhood as well as the cutting length we can guarantee
\begin{itemize}\setlength{\itemsep}{1mm}
\item $M_{n-2}$ and $\partial M_{n-1}$ enclose a bounded region $\Omega_{n-1}$ in $M_{n-1}$ which is disjoint from $\tilde\sigma$;
\item $\dist(M_{n-2},\tilde \sigma)>L$ for arbitrarily large $L>0$;
\item $M_{n-2}$ has $\mathbb{T}^{*}$-stabilized scalar curvature no less than $R(g)-\mu_{\mathrm{loss}}$ for arbitrarily small $\mu_{\mathrm{loss}}>0$.
\end{itemize}
The last property above guarantees that the $\mathbb{T}^{*}$-stabilized scalar curvature on $M_{n-2} \cap \ti N_\ve$ has a positive lower bound $\underline{R}$. The next step is to fill $M_{n-2}$ with chain without creating further intersections with $\tilde \sigma$ and we have to make discussion according to the dimension $n$.

In dimensions 3 and 4, we have the following two properties:
\begin{enumerate}
\item Note that $M_{n-2}$ is either a curve or a surface, so the scalar curvature lower bound $\underline R$ guarantees that the surface $M_{n-2}  \cap \ti N_\ve$ is within a distance $C(\underline{R})$ from its boundary.
\item
After enlarging $\tilde N_{\ve}$ we can guarantee that the ends $\tilde X_{\ve,i}$ are at a far enough distance away from each other. Then the surface $M_{n-2}$ can only enter one $\ti X_{\ve, i}$. Since $\partial \ti X_{\ve, i}$ is closed, we can bound the \emph{extrinsic diameter} of the boundary of $M_{n-2} \cap \ti N_\ve$ by the extrinsic diameter of $\partial \ti X_{\ve, i}$, which is a constant independent of $i$.
\end{enumerate}
Combining these two points, we can bound the extrinsic diameter of $M_{n-2} \cap \ti N_\ve$ by $C(\underline{R}) + \diam (\partial \tilde X_{\ve,i})$.

Once we obtain the above extrinsic diameter estimate, we can then apply the quantitative filling property and fill in $M_{n-2}$ by some relative $(n-1)$-chain $\Gamma$ within some fixed distance independent of $L$. When $L$ is large this chain will have no intersection with $\ti \sigma$. Then we can form the relative $(n-1)$-cycle $$\tilde \Gamma_{n-1}:=\tilde M_{n-1}+\Omega_{n-1}+\Gamma.$$
This relative cycle has non-zero algebraic intersection number with the line $\ti \sigma$, which contradicts $H_{n-1}(\tilde Y, \ti X_{\ve}) = 0$.

In dimension 5, since $M_{n-2}$ is a 3-manifold, the property (1) no longer holds for $M_3$. In order to deal with this issue we have to make a refinement of Chodosh--Li's slice-and-dice argument in \cite{CL20}, to which the most effort in \cite{CCZ23} is devoted.
The idea for the original slice-and-dice argument of Chodosh-Li in the closed case  is as follows: Given the closed 3-manifold $M_3$ with $\mathbb{T}^{*}$-stabilized positive scalar curvature, one can first use closed minimal surfaces $S_k$ to slice $M_3$ into a tree-like manifold with simpler second homology group, and then one can use free boundary $\mu$-bubbles $D_l$ to further dice the tree-like manifold into blocks.
The positive lower bound of the $\mathbb{T}^{*}$-stabilized positive scalar curvature of $M_3$ can be used to bound the intrinsic diameter of the slicing surfaces $S_k$ and the dicing surfaces $D_l$, and therefore one can bound the diameter of the blocks.
In our case, however, we can only obtain an inradius bound for $S_k \cap \ti N_\ve$ and $D_l \cap \ti N_\ve$ from the positive lower bound of the $\mathbb{T}^{*}$-stabilized positive scalar curvature of $M_3\cap \tilde N_{\ve}$ due to the existence of possibly mean-concave boundary. Roughly speaking, the key observation is that
we can establish analogies of properties (1) and (2) for surfaces $S_k$ and $D_l$,
which gives an \emph{extrinsic diameter} bound of the surfaces $S_k \cap \ti N_\ve$ and $D_l \cap \ti N_\ve$, and thereby we can still bound the extrinsic diameter of the blocks. A similar filling argument based on the quantitative filling property gives the desired contradiction again.

\subsubsection{The modification}
In the following discussion, let $M^n$, $n\in\{3,4,5\}$, be a closed aspherical $n$-manifold and $S\subset M$ a subset consisting of disjoint incompressible embedded closed aspherical   submanifolds (possibly with different dimensions). We assume that $(M\setminus S, g)$ is a complete manifold with positive scalar curvature and aim to derive a contradiction.

We follow the same strategy as above.
Let $\pi:\tilde M\to M$ be the universal covering of $M$. Take $\tilde S=\pi^{-1}(S)$ and $\tilde g=\pi^*g$.  Since $S$ consists of incompressible aspherical submanifolds in $M$, its lift $\tilde S$ consists of contractible submanifolds in $\tilde M$. Take $S_\ve$ to be a tubular neighborhood of $S$ in $M$ and denote $\tilde S_\ve=\pi^{-1}(S_\ve)$. We consider the manifold $(\tilde M \setminus \tilde S, \ti g)$, and the relative homology groups $H_k(\tilde M \setminus \tilde S, \ti S_\ve)$.
Since the components of $\tilde S$ are contractible, $\tilde M \setminus \tilde S$ still satisfy the same topological properties as $\tilde Y$.
Then we can take a proper line $\tilde \sigma$ in $\tilde M\setminus \tilde S_\ve$ and find $M_{n-2}$ the same way. If we can fill in $M_{n-2}$ by some relative $(n-1)$-chain within some fixed distance then we are done by the same arguments as above.

The difference here is that we don't have the property (2) anymore because now each component of $\partial \ti S_\ve$ is no longer compact. So we need to find another way to control the extrinsic diameter of $M_{n-2} \cap (\tilde M\setminus \tilde S_\ve)$ such that the filling argument can run smoothly.
In dimensions 3 and 4, recall that we have the inradius estimate for $M_{n-2}\cap (\tilde M\setminus\tilde S_{\ve})$. When $n=3$, the submanifold $M_{n-2}$ consists of curves and the inradius estimate gives the diameter estimate for each component of $M_{n-2}\cap (\tilde M\setminus \tilde S_{\ve})$. Then we can fill $M_{n-2}\cap (\tilde M\setminus \tilde S_{\ve})$ with relative chains of bounded size, component by component. When $n=4$, we take the same idea and try to bound \emph{the extrinsic diameter of each component} of $M_{n-2} \cap (\tilde M\setminus \tilde S_\ve)$. Since the $\mathbb{T}^{*}$-stabilized scalar curvature on $M_{n-2} \cap \pi^{-1}(K)$ has a positive lower bound for any compact subset $K$ of $M\setminus S$, the worrying case is that the surface $M_{n-2}$ stretches to infinity. Fortunately, since $M_{n-2}$ is a surface, this issue can be handled with the help of Proposition \ref{Prop: interior}, which bounds the extrinsic diameter of every interior region of a complete surface with positive $\mathbb{T}^{*}$-stabilized scalar curvature.

However, this type of estimate no longer holds for 3-manifolds. Moreover, if we run the slice-and-dice argument in dimension 5, for the slicing surfaces $S_{k}$ and dicing surfaces $D_l$, we can only bound the extrinsic diameter of each component of $S_{k} \cap (\tilde M\setminus \tilde S_\ve)$ and $D_{l} \cap (\tilde M\setminus \tilde S_\ve)$,
which is not enough to obtain the extrinsic diameter estimate for blocks.
As a result, we can only partially answer Question \ref{Que: main} in dimension 5 when $S$ contains a submanifold with codimension $1$ or $2$.

In order to obtain the partial result --- Theorem \ref{Thm: 5d}, we make the following observation. If $S$ contains one submanifold of codimension 1 or 2, then this essentially implies $M\setminus S$ has a closed incompressible aspherical hypersurface. The presence of such a hypersurface already forbids any metric with positive scalar curvature on $M\setminus S$ from \cite[Theorem 1.1]{CLSZ21}.

\bigskip

The organization of this paper is as follows. In Section \ref{Sec: extrinsic}, we prove the key extrinsic diameter estimate, Proposition \ref{Prop: interior}.
In Section \ref{Sec: proof of main}, we prove our main result in dimension 3 and 4, Theorem \ref{Thm: main}. In Section \ref{Sec: proof of 5d}, we prove
our result in dimension 5, Theorem \ref{Thm: 5d}.

\subsection*{Acknowledgments}
We thank Professors Otis Chodosh, Antoine Song, Ailana Fraser, Kai Xu, and Rudolf Zeidler for their interest and helpful suggestions. Part of this research was performed while the first-named author was in residence at the Simons Laufer Mathematical Sciences Institute (formerly MSRI), which is supported by the National Science Foundation (Grant No. DMS-1928930). The second-named author was partially supported by National Key R\&D Program of China 2024YFA1014800 and 2023YFA1009900, and NSFC grants 12471052 and 12271008. The third-named author was partially supported by National Key R\&D Program of China 2023YFA1009900 and NSFC grant 12401072 as well as the start-up fund from Westlake University.

\section{Extrinsic diameter estimate}
\label{Sec: extrinsic}
We begin with some useful definitions and results.
\begin{definition}\label{Defn: stabilized scalar curvature}
Let $(M,g)$ be a Riemannian manifold possibly with non-empty boundary and $R$ a smooth function on $M$. We say that $(M,g)$ has $\mathbb T^l$-stabilized scalar curvature $R$ if there are $l$ positive smooth functions $v_1,\ldots,v_l$ on $M$ such that the scalar curvature $\tilde R$ of $(M\times \mathbb T^l,g+\sum_i v_i^2\mathrm d\theta_i^2)$ is the $\mathbb T^l$-invariant extension of $R$.
We say that $(M,g)$ has $\mathbb T^*$-stabilized scalar curvature $R$ if $(M,g)$ has $\mathbb T^l$-stabilized scalar curvature $R$ for some positive integer $l$.
\end{definition}

\begin{definition}
We say that $(M,\partial_\pm,g)$ is a Riemannian band if $(M,g)$ is a compact Riemannian manifold with boundary, and $\partial_+$ and $\partial_-$ are two disjoint non-empty smooth compact portions of $\partial M$ such that $\overline{\partial M-(\partial_+\cup \partial_-)}$ is also a compact smooth portion of $\partial M$.
\end{definition}

\begin{lemma}[Lemma 2.12 of \cite{SWWZ24}]\label{Lem: D estimate}
Given any positive constant $c_0$, there is a positive constant $D_{0}=D_0(c_0)$ such that the following holds.
Let $(M^{n},\partial_\pm,g)$ be a Riemannian band with $n\leq7$ and $\overline{\partial M-(\partial_+\cup\partial_-)}=\emptyset$. If
\[
\rho: (M,\de_{\pm}) \to ([-D,D],\pm D)
\]
is a smooth function satisfying
\begin{itemize}\setlength{\itemsep}{1mm}
\item $\Lip\rho<1$;
\item $(M,g)$ has $\mathbb T^*$-stabilized scalar curvature $R$ such that $R\geq c_{0}$ in $\rho^{-1}([-1,1])$ and $R\geq0$ in $M$;
\item Any closed hypersurface $\Sigma$ in $M$ separating $\de_{-}$ from $\de_{+}$ does not admit any metric with positive $\mathbb{T}^{*}$-stabilized scalar curvature,
\end{itemize}
then $D\leq D_{0}$.
\end{lemma}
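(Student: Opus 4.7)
The plan is to invoke Gromov's $\mu$-bubble method in the $\mathbb{T}^*$-stabilized setting and argue by contradiction. Assume the band width $D$ exceeds a threshold $D_0 = D_0(c_0)$ to be chosen; I will produce a smooth closed embedded hypersurface $\Sigma \subset M^\circ$ separating $\partial_-$ from $\partial_+$ that carries a $\mathbb{T}^*$-stabilized metric of positive scalar curvature, directly contradicting the third hypothesis.

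To construct $\Sigma$, first prescribe a smooth decreasing warping function $h = h(\rho)$ on an open sub-interval $I = (-L, L) \subset (-D, D)$ with $L = L(c_0)$ to be fixed, such that $h(\rho) \to +\infty$ as $\rho \to -L^+$, $h(\rho) \to -\infty$ as $\rho \to L^-$, and such that $h^2 + 2h' > -c_0$ on $I \cap [-1, 1]$ and $h^2 + 2h' > 0$ on $I \setminus [-1, 1]$. Pass to the $\mathbb{T}^l$-stabilization $(\hat M, \hat g) = (M \times \mathbb{T}^l, g + \sum_i v_i^2 \, d\theta_i^2)$ realizing the prescribed scalar curvature $R$ as $R_{\hat g}$, and extend $h$ and $\rho$ to $\mathbb{T}^l$-invariant functions. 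Fix a reference Caccioppoli set $\Omega_0 \supset \partial_- \times \mathbb{T}^l$ disjoint from $\rho^{-1}(I) \times \mathbb{T}^l$, and minimize
\[
\mathcal{A}(\Omega) = \mathcal{H}^{n+l-1}(\partial^*\Omega \setminus \partial \hat M) - \int_{\hat M} (\chi_\Omega - \chi_{\Omega_0}) \, h \, dV^{\hat g}
\]
over $\mathbb{T}^l$-invariant Caccioppoli sets with $\Omega \Delta \Omega_0 \Subset \hat M^\circ$. The barriers $h \to \pm \infty$ at $\rho = \mp L$ confine $\partial^* \Omega$ to $\rho^{-1}(I) \times \mathbb{T}^l$; existence via the direct method together with $\mathbb{T}^l$-invariant regularity (which reduces to codimension-one regularity in $M$ and so requires $n \leq 7$) yields a smooth minimizer $\hat \Sigma = \Sigma \times \mathbb{T}^l$, with $\Sigma \subset M^\circ$ a smooth closed hypersurface separating $\partial_\pm$. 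The first variation gives $H_{\hat \Sigma} = h|_{\hat \Sigma}$, and the second variation together with the Gauss equation and $|A|^2 \geq 0$ yields the stability inequality
\[
\int_{\hat \Sigma}\bigl(|\nabla \phi|^2 + \tfrac12 R_{\hat \Sigma} \phi^2\bigr) \geq \tfrac12 \int_{\hat \Sigma}\bigl(R + h^2 + 2\langle \nabla h, \nu\rangle\bigr)\phi^2
\]
for every $\phi \in C^\infty(\hat \Sigma)$. Because $\Lip \rho < 1$, we have $\langle \nabla h, \nu\rangle \geq -|h'|$, so the right-hand integrand is at least $R + h^2 - 2|h'| = R + h^2 + 2h'$ (using $h' \leq 0$), which by construction is strictly positive on $\hat \Sigma$. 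The positive first eigenfunction $\hat u$ of the associated Jacobi-type operator then exhibits the warped product $\hat \Sigma \times S^1$ with metric $g_{\hat \Sigma} + \hat u^2 \, d\theta^2$ as having positive scalar curvature, furnishing $\Sigma$ with a $\mathbb{T}^{l+1}$-stabilized metric of positive scalar curvature and producing the desired contradiction.

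The main obstacle is the quantitative construction of $h$, which determines $L(c_0)$ and hence $D_0(c_0)$. The ODE $h^2 + 2h' = -c_0$ admits a tangent-type solution on an interval of length exactly $2\pi/\sqrt{c_0}$; when $c_0 \geq \pi^2$ this fits inside $[-1,1]$ and one can take $L$ slightly larger than $1$, so $D_0(c_0)$ is of order $1$. For smaller $c_0$, however, one must extend $I$ into the ``buffer region'' $\rho^{-1}([-L,-1]\cup[1,L])$ where only $R \geq 0$ is known, there arranging $h^2 + 2h' > 0$ by choosing $h$ to be nearly a rational pole of the form $\pm A/(L \mp \rho)$ with $A > 2$; the minimal admissible $L$ grows like $\pi/\sqrt{c_0}$ as $c_0 \to 0$. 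Smoothly matching the tangent-type profile on $[-1,1]$ to these polar profiles on the buffer regions across $\rho = \pm 1$ is the technical heart of the construction and pins down the explicit form of $D_0(c_0)$.
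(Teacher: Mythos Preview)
The paper does not prove this lemma; it is quoted verbatim as Lemma~2.12 of \cite{SWWZ24} and used as a black box. Your proposal outlines the standard $\mu$-bubble argument that underlies such band-width estimates, and it is essentially correct and almost certainly what \cite{SWWZ24} does: pass to the $\mathbb{T}^l$-stabilization, build a decreasing potential $h=h(\rho)$ with poles at $\rho=\pm L$ satisfying the differential inequality $h^2+2h'>-R$, minimize the weighted perimeter to produce a $\mathbb{T}^l$-invariant $\mu$-bubble, and read off positive $\mathbb{T}^{l+1}$-stabilized scalar curvature on the quotient separating hypersurface from stability. Your identification of the two-zone ODE problem (tangent profile where $R\geq c_0$, rational pole where only $R\geq 0$) as the quantitative core, and the resulting scale $L\sim \pi/\sqrt{c_0}$, is exactly right.

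Two small points worth tightening. First, your description of the reference set $\Omega_0$ as ``disjoint from $\rho^{-1}(I)\times\mathbb{T}^l$'' is not quite what you want: the standard setup takes $\Omega_0$ with $\partial^*\Omega_0\subset\rho^{-1}(I)$ (e.g.\ $\Omega_0=\{\rho<0\}$), so that the barrier argument forces $\{\rho\leq -L\}\subset\Omega\subset\{\rho\leq L\}$ and hence $\Sigma$ genuinely separates $\partial_\pm$. Second, the matching of the tangent and polar profiles at $\rho=\pm1$ need not be done to all orders; it suffices to build a Lipschitz $h$ satisfying the strict differential inequality in the viscosity sense and then mollify, which avoids the delicate $C^1$-gluing you allude to.
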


\begin{lemma}[Lemma 2.14 of \cite{CCZ23}]\label{Lem: free boundary}
Let $(M^{n},\partial_\pm,g)$ be a Riemannian band with $2\leq n\leq 7$ and $\dist(\partial_+,\partial_-)>2d_0$. If $\Gamma:=\overline{\partial M-(\partial_+\cup\partial_-)}$ does not intersect $\de_{\pm}$ and $(M,g)$ has $\mathbb T^l$-stabilized scalar curvature $R$, then there exists a hypersurface $\Sigma$ (possibly with non-empty boundary $\partial\Sigma$) intersecting $\Gamma$ orthogonally such that
\begin{itemize}\setlength{\itemsep}{1mm}
\item $\Sigma$ and $\partial_-$ bound a relative region $\Omega$ relative to $\Gamma$, i.e. $\partial\Omega-(\Sigma\cup \partial_-)\subset \Gamma$;
\item $(\Sigma,g)$ has $\mathbb T^{l+1}$-stabilized scalar curvature $R'$ where $R'\geq R-\pi^2/d_0^2$.
\end{itemize}
\end{lemma}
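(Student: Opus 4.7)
The plan is to construct a closed $1$-dimensional $\mu$-bubble $\gamma$ separating $\Sigma_{s_0}$ from $\partial\Sigma$ inside $\Sigma_{K(s_0)}$, and use the positive $\mathbb{T}^{*}$-stabilized scalar curvature, combined with a Sturm/Bonnet--Myers style analysis, to bound both the length of each component of $\gamma$ and the extrinsic diameter of the disks it bounds.

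First, I would define $K(s_0)$ as the smallest value satisfying $d_0 := (K(s_0)-s_0)/2 \geq \pi\sqrt{2/\min_{[0,K(s_0)]} L}$; such a $K(s_0)$ exists by a monotone fixed-point argument, since $\min_{[0,K]} L$ is non-increasing in $K$ and positive. The hypothesis $\rho > K(s_0)$ on $\partial\Sigma$ ensures $\partial\Sigma \cap \Sigma_{K(s_0)} = \emptyset$, so the band $M := \overline{\Sigma_{K(s_0)}\setminus\Sigma_{s_0}}$ has empty ``free'' boundary $\Gamma$. Applying Lemma \ref{Lem: free boundary} to $M$ with $\partial_- = \rho^{-1}(s_0)$ and $\partial_+ = \rho^{-1}(K(s_0))$ (after a small smoothing of $\partial_-$), I would obtain a closed $1$-submanifold $\gamma \subset M$ separating $\partial_-$ from $\partial_+$ that is endowed with $\mathbb{T}^{l+1}$-stabilized scalar curvature $\geq \min L - \pi^2/d_0^2 \geq \tfrac{1}{2}\min L$.

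Next, for each component $\gamma_j \cong S^1$ of $\gamma$, let $V = v_1 \cdots v_{l+1}$ be the product of the warping functions provided by the stabilization. The scalar curvature formula for the multi-warped product $\gamma_j \times \mathbb{T}^{l+1}$ reduces to the inequality $V'' + \tfrac{1}{4}(\min L) V \leq 0$ along $\gamma_j$. A Sturm-type eigenvalue analysis, using that the smallest nonzero eigenvalue of $-\partial_s^2$ on a circle of length $\ell$ is $(2\pi/\ell)^2$, yields the length bound $\mathrm{length}(\gamma_j) \leq 4\pi/\sqrt{\min L}$. A Gauss--Bonnet / Euler characteristic argument (the positive stabilized curvature precludes non-disk topologies on the $\Sigma_{s_0}$-side) then ensures that each component of $\Sigma_{K(s_0)} \setminus \gamma$ meeting $\Sigma_{s_0}$ is a topological disk $D_j$; these are pairwise disjoint and their union covers $\Sigma_{s_0}$. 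Finally, for any two points $x, y \in \Sigma_{s_0}^o \subset D_j$, a distance-comparison argument inside $D_j$ (using the positive interior curvature and the short boundary $\gamma_j$) shows they can be joined in $\Sigma_{K(s_0)}$ by a path of length at most $\mathrm{length}(\gamma_j) \leq 4\pi/\sqrt{\min L}$, giving the stated extrinsic diameter bound.

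The main technical challenge I anticipate is rigorously justifying the length bound on $\gamma_j$, since a closed $1$-manifold cannot literally admit positive scalar curvature; the ``stabilized curvature'' from Lemma \ref{Lem: free boundary} must be interpreted via the stability inequality of the $\mu$-bubble, and the length bound extracted by integrating the second variation against carefully chosen test functions rather than by a naive warped-product ODE. A secondary, more delicate point is turning the length bound on the boundary curve $\gamma_j$ into a genuine extrinsic diameter bound on $D_j$ in the ambient surface $\Sigma_{K(s_0)}$; this likely requires either an iterative $\mu$-bubble refinement producing nested concentric curves of controlled annular width, or an isoperimetric/comparison argument exploiting the positive Gauss curvature implied by the stabilization.
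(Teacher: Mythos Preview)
Your proposal is not a proof of the stated lemma (which is quoted from \cite{CCZ23} and not reproved here) but rather an attempt at Proposition~\ref{Prop: interior}, which \emph{invokes} Lemma~\ref{Lem: free boundary}. Treating it as such, there is a genuine gap at the first substantive step. You apply Lemma~\ref{Lem: free boundary} to the band $\overline{\Sigma_{K(s_0)}\setminus\Sigma_{s_0}}$ with \emph{empty} free boundary $\Gamma$, producing a closed $1$-submanifold $\gamma$ with $\mathbb T^{l+1}$-stabilized scalar curvature $\geq\tfrac12\min L>0$. That conclusion is not something to be reinterpreted via a stability inequality: by Definition~\ref{Defn: stabilized scalar curvature} it literally asserts the existence of a positive-scalar-curvature warped metric on $S^1\times\mathbb T^{l+1}\cong\mathbb T^{l+2}$, which is impossible. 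So your construction is self-contradictory; there is no curve $\gamma$ from which to extract a length bound, and all downstream steps (the Sturm bound, the Gauss--Bonnet disk argument, the boundary-length-to-diameter implication) collapse. At best your argument shows that with your choice of $K(s_0)$ the hypotheses force $\Sigma_{s_0}=\emptyset$ --- but your $K(s_0)$ need not exist, since the condition $(K-s_0)^2\min_{[0,K]}L\ge 8\pi^2$ can fail for every $K$ when $L$ decays faster than quadratically (e.g.\ $L(s)=(1+s)^{-3}$), so your ``monotone fixed-point argument'' does not go through.

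The paper's proof avoids this trap by never forcing the $\mu$-bubble to be closed. For the disk topology it uses no $\mu$-bubble at all: it takes level sets of $\rho$ at two separated scales $s_0^*<s_1^*$ and applies the band-width estimate Lemma~\ref{Lem: D estimate} (once via a double-cover trick to rule out genus, once via a signed-distance function to show each level circle bounds a disk), with $K(s_0)$ defined sequentially from the constants $D_0(\cdot)$ so that no circular fixed-point is needed. For the diameter bound it argues by contradiction: given far-apart $p_\pm\in\Sigma_{s_0}^o$, it removes small balls around them and applies Lemma~\ref{Lem: free boundary} to $\Sigma_{s'}\setminus(B_\varepsilon(p_-)\cup B_\varepsilon(p_+))$ with \emph{non-empty} $\Gamma=\rho^{-1}(s')$. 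The resulting $\gamma$ is now allowed to have endpoints on $\rho^{-1}(s')$; either it is closed (same torus contradiction) or it has an arc running from $\Sigma_{s_0}$ out to $\rho^{-1}(s')$, whose length is simultaneously forced large by $\Lip\rho<1$ and bounded by Lemma~\ref{Lem: D estimate}. Allowing the bubble to escape to the outer level set is exactly what converts the immediate torus obstruction into a usable length estimate.
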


We can then give a proof of Proposition \ref{Prop: interior}.

\begin{proof}[Proof of Proposition \ref{Prop: interior}]
It suffices to determine the value of $K(s_{0})$ for $s_{0}$. Set
\[
c_{0} = \min_{[0,s_{0}+2]}L,\ \
s_{1} = s_{0}+D_{0}(c_{0})+D_{0}(3c_{0}/4)+2,
\]
\[
c_1=\min_{[0,s_{1}+3]}L,\ \ K(s_{0})=s_1+D_0(c_1)+3,
\]
where $D_0(c_0)$, $D_0(3c_0/4)$ and $D_0(c_1)$ are the constants defined in Lemma \ref{Lem: D estimate}.
Let $s'$ be a regular value of $\rho$ in $(K(s_0)-1,K(s_0))$.
We deal with interior topology and interior geometry separately.

\bigskip

\noindent
{\bf I. Interior topology.}
The argument is broken into three steps.

\medskip

\noindent
{\bf Step 1.} Let $s_{1}^{*}$ be a regular value of $\rho$ in $(s_{1},s_{1}+1)$. Then each component $\Sigma_{s_1^*}^o$ of $\Sigma_{s_1^*}$ is homeomorphic to a $2$-sphere with finitely many disks removed.

\medskip

Otherwise, some component $\Sigma_{s_1^*}^o$ must be $m\mathbb{T}^{2}$ or $m\mathbb{RP}^{2}$ after filling with disks. In particular, $\Sigma_{s_1^*}^o$ has the form of $\mathbb{T}^2\#S$ or $\mathbb{RP}^2\#S$ for some surface $S$, then we can find $\hat \Sigma^o_{s_1^*}\subset \Sigma_{s_1^*}^o$ homeomorphic to $\mathbb T^2-B$ or $\mathbb{RP}^{2}-B$.
Let $\Sigma_{s'}^o$ denote the component of $\Sigma_{s'}$ containing $\Sigma_{s_1^*}^o$, and let $S_+$ and $S_-$ denote copies of $\Sigma_{s'}^o\setminus \hat\Sigma_{s_1^*}^o$. Consider a double cover $\tilde\Sigma_{s_1^*}^o$ of $\hat\Sigma_{s_1^*}^o$ and clearly $\tilde\Sigma_{s_1^*}^o$ has the form of $\mathbb T^2-(B_+\sqcup B_-)$ or $\mathbb S^2-(B_+\sqcup B_-)$, where $B_+$ and $B_-$ are lifts of $B$. Correspondingly, the surface $\Sigma_{s'}^o$ is lifted to some double cover
\[
\tilde\Sigma_{s'}^o=S_+\cup \tilde\Sigma_{s_1^*}^o \cup S_-.
\]
Denote $\pi:\tilde\Sigma_{s'}^o\to \Sigma_{s'}^o$ to be the covering map. Then the map
\begin{equation*}
\tilde \rho(x)=\left\{
\begin{array}{cc}
(\rho\circ \pi-s_1^*)_+,& x\in S_+;\\[1mm]
-(\rho\circ \pi-s_1^*)_+,& x\in S_-;\\[1mm]
0,&\mbox{otherwise},
\end{array}\right.
\end{equation*}
is a Lipschitz function on $\tilde\Sigma_{s'}^o$ with $\Lip \tilde \rho<1$. Also we see that the function $\tilde \rho $ has range $[s_1^*-s',s'-s_1^*]$, and that $\tilde \Sigma_{s'}^o$ has $\mathbb{T}^*$-stabilized scalar curvature $R\geq c_1\chi_{[-2,2]}\circ\tilde \rho$. We further notice that the third hypothesis of Lemma \ref{Lem: D estimate} is trivial on surfaces since $\mathbb{S}^1$ does not admit any metric with positive $\mathbb{T}^{*}$-stabilized scalar curvature. Thus by smooth approximation and Lemma \ref{Lem: D estimate} we obtain
$$
D_0(c_1)+1 = K(s_{0})-1-(s_{1}+1) < s'-s_1^*\leq D_0(c_1),
$$
which leads to a contradiction.

\medskip

\noindent
{\bf Step 2.} Let $s_0^*$ be a regular value of $\rho$ in $(s_0,s_0+1)$. Then each circle $\gamma$ in $\rho^{-1}(s_0^*)$ bounds a disk in $\Sigma_{s_{1}^{*}}$.

\medskip

Let $\Sigma_{s_1^*}^o$ be the component of $\Sigma_{s_1^*}$ containing $\gamma$. Suppose that $\gamma$ does not bound a disk in $\Sigma_{s_1^*}$. Recall that $\Sigma_{s_1^*}^{o}$ is homeomorphic to a $2$-sphere with finitely disks removed. From this we conclude that $\gamma$ separates the boundary components of $\partial\Sigma_{s_1^*}^o$ into two non-empty collections. Denote
$$\Sigma_{s_1^*}^o-\gamma=\Omega_+\sqcup \Omega_-.$$
Define
\begin{equation*}
\rho_{s_1^*}(x)=\left\{
\begin{array}{cc}
\min\{\dist(x,\gamma), s_1^*-s_0^*\},& x\in \Omega_+;\\[1mm]
-\min\{\dist(x,\gamma), s_1^*-s_0^*\},& x\in \Omega_-;\\[1mm]
0,&\mbox{otherwise}.
\end{array}\right.
\end{equation*}
Then $\rho_{s_1^*}$ is a Lipschitz function on $\Sigma_{s_1^*}^o$ with $\Lip \rho_{s_1^*}\leq1$. Also $\rho_{s_1^*}$ has range $[s_0^*-s_1^*,s_1^*-s_0^*]$, and that $\Sigma_{s_1^*}^o$ has $\mathbb{T}^*$-stabilized scalar curvature $$R\geq c_0\chi_{[-s_0-2+s_0^*,s_0+2-s_0^*]}\circ \rho_{s_1^*}.$$
By smooth approximation and Lemma \ref{Lem: D estimate} we obtain
$$
D_0(c_0)+D_0(3c_0/4)+1 = s_{1}-s_{0}-1 < s_1^*-s_0^*\leq D_0(c_0),
$$
which leads to a contradiction.

\medskip

\noindent
{\bf Step 3.} $\Sigma_{s_0}$ is contained in $\sqcup_j D_j$, where $\{D_j\}$ are finitely many disjoint disks in $\Sigma_{s_1^*}$.

\medskip

By passing to each component we can assume $\Sigma_{s_1^*}$ to be connected. Due to the fact $\Sigma_{s_0}\subset \Sigma_{s_0^*}$ it suffices to show that $\Sigma_{s_0^*}$ is contained in $\sqcup_j D_j$, where $\{D_j\}$ are finitely many disjoint disks in $\Sigma_{s_1^*}$. From our assumption we know that $\Sigma$ is connected and so $\Sigma_{s'}$ has non-empty boundary. In particular, $\Sigma_{s_1^*}$ has non-empty boundary as well. Denote $\rho^{-1}(s_0^*)=\sqcup_k\gamma_k$. From the previous steps we conclude that each $\gamma_k$ bounds a unique disk $D_k$ in $\Sigma_{s_1^*}$.

We claim that for $k\neq l$ one of the following happens:
\begin{itemize}\setlength{\itemsep}{1mm}
\item $D_k$ and $D_l$ are disjoint;
\item $D_k\subset D_l$ or $D_l\subset D_k$.
\end{itemize}
Otherwise, we can find a point $p_{kl}\in D_k\cap D_l$ and a pair of points $p_k\in D_k\setminus D_l$ and $p_l\in D_l\setminus D_k$. Connecting $p_{kl}$ and $p_k$ with path in $D_k$ we see $\gamma_l\cap D_k\neq\emptyset$. Since $\gamma_k$ and $\gamma_l$ are disjoint, we conclude $\gamma_l\subset D_k$ and similarly $\gamma_k\subset D_l$. In particular, the union $D_k\cup D_l$ is a closed surface in $\Sigma_{s_1^*}$, which is impossible since $\Sigma_{s_1^*}$ is connected with non-empty boundary.

Now we can take the collection of maximal disks $\{D_{j,\mathrm{max}}\}$ with respect to the inclusion. From previous claim we conclude that $D_{j,\mathrm{max}}$ are pairwise disjoint.

It remains to show $\Sigma_{s_0^*}\subset \sqcup D_{j,\mathrm{max}}$. Take any component $\Sigma_{s_0^*}^o$ of $\Sigma_{s_0^*}$. Since $\Sigma_{s_0^*}^o$ does not cross $\sqcup_k\gamma_k$, either $\Sigma_{s_0^*}^o\subset \sqcup D_{j,\mathrm{max}}$ or $\Sigma_{s_0^*}^o$ is disjoint with all disks $D_k$. Denote $\partial\Sigma_{s_0^*}^o=\sqcup_l \gamma_{k_l}$. Then the union
$$
\Sigma_{s_0^*}^o\cup\left(\bigsqcup_lD_{k_l}\right)
$$
is a closed surface in $\Sigma_{s_1^*}$, which leads to the same contradiction as before.

\bigskip
\noindent
{\bf II. Interior geometry.}
To establish the extrinsic diameter estimate, we argue by contradiction. Set
\[
c_2=\min_{[0,K(s_{0})]}L.
\]
If
\[
\diam(\Sigma_{s_0}^{o} \subset (\Sigma_{s_1^*},g)) > 4\pi \left(\min_{[0,K(s_{0})]}L\right)^{-\frac{1}{2}} = \frac{4\pi}{\sqrt{c_{2}}},
\]
then there exist two points $p_{+},p_{-}\in\Sigma_{s_0}^{o}$ such that $\dist(p_{+},p_{-})>4\pi/\sqrt{c_{2}}$. Recall that $s'$ is a regular value of $\rho$ in $(K(s_{0})-1,K(s_{0}))$. For sufficiently small $\ve>0$, set
\[
M = \Sigma_{s'}\setminus(B_{\ve}^{g}(p_{-})\cup B_{\ve}^{g}(p_{+})), \ \
\de_{-} = \de B_{\ve}^{g}(p_{-}), \ \
\de_{+} = \de B_{\ve}^{g}(p_{+}), \ \
\Gamma = \de\Sigma_{s'},
\]
and then $(M,\de_{\pm},g)$ is a Riemannian band with $\dist(\partial_+,\partial_-)>4\pi/\sqrt{c_{2}}$ and $\Gamma\cap(\de_{+}\cup\de_{-})=\emptyset$. Recall that $(\Sigma,g)$ has $\mathbb{T}^l$-stabilized scalar curvature $R\geq L\circ \rho$ in $\Sigma_{K(s_0)}$ and so in $\Sigma_{s'}$.
By Lemma \ref{Lem: free boundary} (with the choice $d_{0}=2\pi/\sqrt{c_{2}}$), there exists a curve $\gamma$ satisfying
\begin{itemize}\setlength{\itemsep}{1mm}
\item[(a)] $\gamma$ and $\partial_-$ bound a relative region $\Omega$ relative to $\Gamma$;
\item[(b)] $\gamma$ has the positive $\mathbb T^{l+1}$-stabilized scalar curvature
\[
R' \geq R-\frac{c_{2}}{4} \geq L\circ \rho-\frac{c_{2}}{4} \geq c_{2}-\frac{c_{2}}{4} = \frac{3}{4}c_{2} > 0.
\]
\end{itemize}
We split the argument into two cases:

\medskip

{\it Case 1. $\gamma\cap\rho^{-1}(s')=\emptyset$.} In this case, $\de\gamma=\emptyset$ and each component of $\gamma$ is $\mathbb{S}^{1}$. Then (b) shows that $\mathbb{S}^{1}$ has positive $\mathbb T^{l+1}$-stabilized scalar curvature. This contradicts the fact that $\mathbb{T}^{n}$ cannot admit any smooth metric with positive scalar curvature.

\medskip

{\it Case 2. $\gamma\cap\rho^{-1}(s')\neq\emptyset$.} Let $\sigma$ be a path in $\Sigma_{s_{0}}^{o}$ such that
\begin{itemize}\setlength{\itemsep}{1mm}
\item $\sigma$ connects $p_{-}$ and $p_{+}$;
\item $\sigma$ has non-zero algebraic intersection with $\de_{-}$.
\end{itemize}
Using (a), $\gamma$ is relative homologous to $\de_{-}$ relative to $\Gamma$ and then $\gamma$ has non-zero algebraic intersection with $\sigma$. In particular, we obtain $\gamma\cap\sigma\neq\emptyset$ and so $\gamma\cap\Sigma_{s_{0}}\neq\emptyset$. Fix a point $x_{0}\in\gamma\cap\Sigma_{s_{0}}$ and the component of $\gamma$ containing $x_{0}$. For convenience, we still denote this component by $\gamma$. Choose an arc length parametrization of $\gamma$ such that
\[
\gamma:[-D',D'']\to M, \ \
\gamma(0) = x_{0}, \ \
\gamma(-D'),\,\gamma(D'') \in \rho^{-1}(s'),
\]
where $D'$ and $D''$ are two positive constants. We next assume that $D'\geq D''$, as the case $D''>D'$ can be proved similarly. Since $\Lip\rho <1$, then
\begin{equation}\label{rho gamma t control}
|\rho(\gamma(t))-\rho(\gamma(0))| < |t| \ \text{for $t\in[-D',D''] ,\, t \neq 0$}.
\end{equation}
Choosing $t=D''$ and recalling the facts $\gamma(D'') \in \rho^{-1}(s')$ and $\rho(\gamma(0))\leq s_0$, we can derive from \eqref{rho gamma t control} that
\begin{equation}\label{D'' lower bound}
D'' > s' - s_{0} > K(s_{0})-2-s_{0} = D_{0}(c_{0})+D_{0}(3c_{0}/4)+D_0(c_1)+3.
\end{equation}
Using \eqref{rho gamma t control} again, we obtain
\[
\rho(\gamma(t)) < s_{0}+1 \ \text{for $t\in[-1,1]$},
\]
which implies $\gamma([-1,1])\subset\Sigma_{s_{0}+1}$. Together with (b) and $c_{2}\leq c_{0}$, we know that $\gamma$ has positive $\mathbb T^{l+1}$-stabilized scalar curvature $R'$ with
\[
R' \geq R-\frac{c_{2}}{4} \geq L\circ \rho-\frac{c_{2}}{4} \geq c_{0}-\frac{c_{2}}{4} \geq \frac{3c_{0}}{4} > 0 \ \text{in $\gamma([-1,1])$}.
\]
Applying Lemma \ref{Lem: D estimate} to $\gamma([-D'',D''])$ and $\rho|_{\gamma([-D'',D''])}$, we obtain $D''\leq D_{0}(3c_{0}/4)$, which contradicts with \eqref{D'' lower bound}.
\end{proof}

\section{Proof of Theorem \ref{Thm: main}}
In the rest of this paper, we define {\it the orientable cover} of a manifold $N$ to be
\begin{itemize}\setlength{\itemsep}{1mm}
\item $N$ when $N$ is orientable;
\item the orientation double cover of $N$ when $N$ is non-orientable.
\end{itemize}
\label{Sec: proof of main}
\begin{proof}[Proof of Theorem \ref{Thm: main}]
By passing to the orientable cover of $M$, we can always assume that $M$ is orientable. We just need to deal with the case when $n=4$ since we can make Riemannian product with $\mathbb S^1$ in dimension three. First we take an embedded closed curve $\sigma$ disjoint from $S$ such that $[\sigma]\neq 0\in \pi_1(M)$, which can be done as follows.

\medskip

{\it Case 1. $S$ contains some hypersurface $S_0$.} Then we can know $\pi_1(S_0)\neq 0$ from its aspherical property. Take an embedded circle $\sigma_0$ in $S_0$ with $[\sigma_0]\neq 0\in \pi_1(S_0)$. If $\sigma_0^*(NS_0)$ is a trivial bundle, we pick a nowhere-zero normal vector field along $\sigma_{0}$ and push $\sigma_0$ away from $S_{0}$ to obtain an embedded closed curve $\sigma$. Otherwise, we do the same thing for the double cover of $\sigma_0$ to obtain an embedded closed curve $\sigma$. From the construction we know that $\sigma$ is homotopic to $\sigma_0$ or $\sigma_0^2$. Since $M$ is aspherical, $\pi_1(M)$ has no torsion. Since the map $\pi_1(S_0)\to\pi_1(M)$ is injective, we see $[\sigma]\neq 0\in\pi_1(M)$.

\medskip

{\it Case 2. $S$ contains no hypersurface.} Then we just take an embedded closed curve $\sigma$ in $M$ with $[\sigma]\neq 0\in \pi_1(M)$. Since $S$ has codimension greater than one, we can perturb $\sigma$ such that $\sigma$ is disjoint from $S$.

\medskip

In the following, we are going to deduce a contradiction assuming that there is a complete metric $g$ on $M\setminus S$ with positive scalar curvature. We begin with the basic set-up. Let $\pi:\tilde M\to M$ denote the universal covering of $M$. Take $\tilde S=\pi^{-1}(S)$ and $\tilde g=\pi^*g$. Since $S$ consists of incompressible aspherical submanifolds in $M$, its lift $\tilde S$ consists of contractible submanifolds in $\tilde M$. On the other hand, we take a line $\tilde \sigma$ in $\tilde M$ which is a lift of $\sigma$.

Take $S_\ve$ to be a tubular neighborhood of $S$ in $M$ disjoint from $\sigma$. Denote $\tilde S_\ve=\pi^{-1}(S_\ve)$. Since $\tilde S$ is contractible, we have $H_i(\tilde S_\ve)=0$ for all $i\geq 1$. Recall that $\tilde M$ is contractible as well. The composed map
$$H_1(\ti M \setminus \ti S) \to  H_1(\ti M) \to H_1(\tilde M,\tilde S_{\ve})$$
is the zero map. Moreover, from the exact sequence
$$
H_i(\tilde M)\to H_i(\tilde M,\tilde S_\ve)\to H_{i-1}(\tilde S_\ve)
$$
we conclude $H_i(\tilde M,\tilde S_\ve)=0$ for all $i\geq 2$.

Let $\rho:M\setminus S\to [0,+\infty)$ denote a proper smooth function such that $\rho\equiv 0$ in $M\setminus S_\ve$ and $\Lip\rho <1$. We define
$$
L(s)=\frac{1}{2}\min_{\{\rho\leq s\}} R(g).
$$
Fix $s_0=1$ and let $K(s_0)$ be the constant from Proposition \ref{Prop: interior}.

Similar as in \cite[Section 3.1]{CCZ23} for any large constant $\tilde L$ we can construct a piecewisely smooth hypersurface $M_3$ in $\tilde M\setminus \tilde S$ with non-empty boundary such that
\begin{itemize}\setlength{\itemsep}{1mm}
\item $M_3$ has non-zero intersection number with $\tilde \sigma$;
\item $\dist(\partial M_3,\tilde \sigma)\geq \tilde L$;
\item $\partial M_3$ has $\mathbb{T}^*$-stabilized scalar curvature $R_3\geq R(\tilde g)|_{\partial M_3}-c\tilde L^{-2}$, where $c$ is an absolute positive constant independent of $\tilde L$.
\end{itemize}
The independence of $c$ on $\tilde L$ follows from the proof of \cite[Lemma 3.3]{CCZ23}. Actually we can take $c$ to be any positive number greater than $4 \pi^2$.

Take $\tilde L$ large enough so that
$$
c\tilde L^{-2}< \min_{[0,K(s_0)]}L = L(K(s_0)).
$$
Let $\Sigma$ be any component of $\partial M_3$. It is clear that
$$
\Sigma = \Sigma\cap (\tilde M\setminus \tilde S_\ve) \mbox{ modulo }\tilde S_\ve.
$$

Next we show how to establish the diameter estimate for each component of $\Sigma\cap (\tilde M\setminus \tilde S_\ve)$. There are three cases:

\medskip

{\it Case 1. $\Sigma\cap (\tilde M\setminus \tilde S_\ve)=\emptyset$.} This case is trivial.

\medskip

{\it Case 2. $\Sigma\cap (\tilde M\setminus \tilde S_\ve)\neq \emptyset$ and $\Sigma\subset \{\rho\circ \pi\leq K(s_0)\}$.} Then we know that $\Sigma$ has $\mathbb{T}^*$-stabilized scalar curvature
$$
R_3\geq R(\tilde g)|_{\partial M_3}-c\tilde L^{-2}\geq L(K(s_0)).
$$
and so it follows from \cite{Gro20} that its diameter is no greater than
$$
D_1:=2\pi L(K(s_0))^{-\frac{1}{2}}.
$$

\medskip

{\it Case 3. $\Sigma\cap (\tilde M\setminus \tilde S_\ve)\neq \emptyset$ and $\Sigma\cap \{\rho\circ \pi > K(s_0)\}\neq \emptyset$.} By the choice of $\rho$, $\Sigma\cap (\tilde M\setminus \tilde S_\ve)$ is contained in $\Sigma_{s_0} =\Sigma_1$. We can apply Proposition \ref{Prop: interior} to $\Sigma$ associated with the map $(\rho\circ\pi)|_\Sigma$ to obtain that each component of $\Sigma\cap (\tilde M\setminus \tilde S_\ve)$ has diameter no greater than
$$
D_2:=4\pi L(K(s_0))^{-\frac{1}{2}}.
$$

\medskip

From $H_2(\tilde M,\tilde S_\ve)=0$ and the above discussion, we conclude that each component of $\Sigma\cap (\tilde M\setminus \tilde S_\ve)$ is a relative boundary and has diameter no greater than $\max\{D_{1},D_{2}\}$. Using the quantitative filling lemma \cite[Lemma 2.8]{CCZ23} we conclude that there is a $3$-chain $\Gamma$ lying in $D_3$-neighborhood of $\partial M_3$ such that
$$
\partial\Gamma=\Sigma\mbox{ modulo }\tilde S_\ve,
$$
where $D_3$ is an absolute constant depending only on $D_1$, $D_2$ and $(M,g)$ but independent of $\tilde L$. When $\tilde L>D_3$ the relative cycle $M_3+\Gamma$ has non-zero intersection number with $\tilde \sigma$ and so it contributes to some non-trivial element in $H_3(\tilde M,\tilde S_\ve)$, which contradicts the fact $H_3(\tilde M,\tilde S_\ve)=0$. This contradiction shows that $M\setminus S$ cannot admit a complete metric with positive scalar curvature.

\medskip

Lastly, we prove the rigidity part. Suppose that $M\setminus S$ admits a complete metric $g$ of nonnegative scalar curvature. Again by passing to the orientable cover we can assume that $M$ is orientable. By a result of Kazdan \cite{Kaz82}, if $(M\setminus S,g)$ is not Ricci-flat, then there exists a complete metric with positive scalar curvature. Thus $(M\setminus S,g)$ is Ricci-flat.

If $S$ contains any component $S_0^k$ of dimension $k \le n-2$ that is not a hypersurface, then we claim that $S$ contains no hypersurface. Otherwise, we can find a component $U_0$ of $M\setminus S$ such that $U_0$ has at least two ends, for which one is compactified by $S_0$ in $M$ and the other is compactified by some hypersurface $S_1$ in $M$. Then it follows from the Cheeger--Gromoll splitting theorem \cite[Theorem 4]{CG71} that $U_0$ is isometric to some Riemannian product $N\times \mathbb R$, where $N$ is a closed Riemannian manifold. Denote the orientable cover of $S_1$ by $\check S_1$ and the unit tangent sphere bundle of $S_0$ in $M$ by $T_1S_0$. Realize $T_1S_0$ to be the boundary of a tubular neighborhood of $S_0$ in $M$.
Then $\check S_1$ and $T_1S_0$ are homotopy equivalent since they are both homotopy equivalent to $N$.
However, by considering the long exact sequence of homotopy groups for fibration, we have
\[
\pi_{n-k+1}(S_0) \to \pi_{n-k}(\mathbb{S}^{n-k}) \to \pi_{n-k}(T_1S_0).
\]
Since $n-k+1 \ge 3$, and $S_0$ is aspherical, we have $\pi_{n-k+1}(S_0) = 0$, so $\mathbb Z \cong \pi_{n-k}(\mathbb{S}^{n-k}) \to \pi_{n-k}(T_1S_0)$ is injective. But this shows $0 \neq \pi_{n-k}(T_1S_0) = \pi_{n-k}(\check S_1)$, which contradicts that $\check S_1$ is aspherical.

By the homotopy classification of aspherical spaces \cite[Theorem 1.1]{Luc08}, since $S_0$ is not
homotopy equivalent to any finite cover of $M$, $\pi_1(S_0)$ must be a proper subgroup of $\pi_1(M)$ with infinite index. We can consider the covering $\hat\pi: \hat M \to M$ such that $\pi_1(\hat M) = \pi_1(S_0)$. Let $\hat S = \hat\pi^{-1}(S)$. Then we can lift the Ricci-flat metric $g$ to $\hat M \setminus \hat S$ as $\hat g$, and $\hat \pi^{-1}(S_0)$ consists of infinitely many components diffeomorphic to $S_0$, and each of them gives one end in $(\hat M \setminus \hat S, \hat g)$. Recall that $S$ contains no hypersurface and so does $\hat S$, which means that $\hat M\setminus \hat S$ is connected. However, by the Cheeger--Gromoll splitting theorem, a complete connected Ricci-flat manifold has at most two ends, which leads to a contradiction.

We have shown that $S$ consists of hypersurfaces  only, whose collection will be denoted by $\{S_j\}$. For each connected component $U$ of $M\setminus S$, we know that it can have at most two ends. Fix an arbitrary metric on $M$ and denote the metric completion of $U$ by $\overline U$.

If $U$ has two ends, then $ U$ is isometric to a cylinder $N\times \mathbb R$ for some closed Ricci-flat manifold $N$ by the Cheeger--Gromoll splitting theorem \cite[Theorem 4]{CG71}, and each component of the boundary $\partial\overline U$ is the orientable cover of some $S_j$.

If $U$ has only one end, we claim that we can find a two-sheeted cover $\overline U^*$ of $\overline U$ such that the interior of $\overline U^*$ is a cylinder $N\times \mathbb R$ for a closed manifold and the boundary $\partial\overline U^*$ consists of two copies of the orientable cover of some $S_j$. To see this, we start with a careful analysis of the topology of $\overline U$. Since $U$ has only one end,
the boundary $\partial \overline U$ is connected and it is the orientable cover $\check S_j$ of some $S_j$. Recall that $S_j$ is incompressible in $M$, then we can conclude that $\partial \overline U$ is incompressible in $\overline U$, which yields that the map
$\pi_1(\partial \overline U) \to \pi_1(\overline{U})$
is injective. Let us show that $$\pi_1(\partial \overline U) \neq \pi_1(\overline{U}).$$ Otherwise, $i_*:\pi_1(\partial \overline U)\to \pi_1(\overline U)$ is an isomorphism. Recall that $\partial \overline U=\check S_j$ is an Eilenberg--MacLane space, then we can find a continuous map
$$f: \overline U \to \partial \overline U$$ such that $f_*:\pi_1(\overline U)\to \pi_1(\partial \overline U)$ is the inverse map of $i_*:\pi_1(\partial \overline U)\to \pi_1(\overline U)$. Since $f\circ i$ induces an identity map on $\pi_1(\partial \overline U)$, we have the homology relation
$$f_*i_*([\partial\overline U])=[\partial \overline U].$$
However, this is impossible since $\partial \overline U$ is null-homologous in $\overline U$. Therefore, We have shown that $\pi_1(\partial \overline U)$ is a proper subgroup of $\pi_1(\overline U)$. In particular, we can find a cover $\overline U^*$ of $\overline U$ with $\pi_1(\overline U^*)=\pi_1(\partial \overline U)$ such that $\partial \overline U^*$ consists of multiple copies of $\partial \overline U$. Correspondingly, the interior of $\overline U^*$ has multiple ends, and it admits a complete Ricci-flat metric as a cover of $U$. Then the Cheeger--Gromoll splitting theorem yields that the interior of $\overline U^*$ is isometric to a cylinder $N\times \mathbb R$ for some closed Ricci-flat manifold $N$ and so $\partial \overline U^*$ consists of two copies of $\check S_j$.

Thus by lifting to a finite cover of the pair $(M,S)$, we can assume that each connected component $U$ of $M\setminus S$ is isometric to a cylinder $N\times \mathbb R$ for some closed Ricci-flat manifold $N$ and that all $S_j$ are orientable. Let us label the components of $M\setminus S$ by $U_k$ and denote $U_k=N_k\times \mathbb R$. From the cylindrical structure of $U_k$ we see that each $N_k$ admits a non-zero degree map to some closed aspherical manifold $S_j$. It follows from \cite[Proposition 4.4]{CCZ23} that $N_k$ is a closed flat manifold and so the metric $g$ is flat. From the cylindrical structure of $U_k$ we also have the isomorphisms
$$\pi_1(S_j)=\pi_1(U_k)=\pi_1(N_k)$$
through homotopies in $M$. By lifting further to a finite cover of the pair $(M,S)$, we can assume $\pi_1(N_k)=\mathbb Z^{n-1}$ for all $k$. From the second Bieberbach rigidity theorem we know that each $N_k$ is diffeomorphic to $\mathbb T^{n-1}$, and so all $U_k$ are cylinders $\mathbb T^{n-1}\times \mathbb R$. We complete the proof.
\end{proof}

\section{Proof of Theorem \ref{Thm: 5d}}\label{Sec: proof of 5d}

\begin{proof}[Proof of Theorem \ref{Thm: 5d}]
Let $n$ be the dimension of $M$. Here $n =5$ but the same proof also works if $n$ equals 3 or 4. We note that the rigidity part is exactly the same as in the proof of Theorem \ref{Thm: main}. Thus we only need to show that $M\setminus S$ cannot admit any complete metric with positive scalar curvature. According to our assumption, there are two cases.

\medskip

{\it Case 1. $S$ contains a hypersurface $\Sigma^{n-1}$ of codimension $1$.}
By passing to the orientable cover of $M$, we can assume that $M$ is orientable. Denote the orientable cover of $\Sigma$ by $\check \Sigma$ and we can realize it as a boundary component of some small tubular neighborhood of $\Sigma$ in $M$. Clearly, $\check \Sigma$ is orientable and incompressible in $M$ and so in $M\setminus S$.
Since $\check\Sigma$ is aspherical, by \cite[Theorem 1.1]{CLSZ21} $M \setminus S$ cannot admit any complete metric with positive scalar curvature.

\medskip

{\it Case 2. $S$ contains a submanifold $\Sigma^{n-2}$ of codimension $2$.} Again we can assume that $M$ is orientable. We make a discussion depending on whether $\Sigma$ is orientable or not.

Suppose that $\Sigma$ is orientable.
Take $\Sigma_\ve$ to be a small tubular neighborhood of $\Sigma$ in $M$. Then $\partial \Sigma_\ve$ is a circle bundle over $\Sigma$. By considering the long exact sequence of homotopy groups for fibration, we see that $\partial \Sigma_\ve$ is again an aspherical manifold, and $\pi_1(\partial \Sigma_\ve)$ is generated by curves in $\pi_1(\partial D)$ and $\pi_1(\Sigma)$, where $D$ is the disk fiber of $\Sigma_\ve$ over a point $p \in \Sigma$.
Our goal is to show that $\pi_1(\partial \Sigma_\ve) \to \pi_1(M\setminus S)$ is injective.  Then it follows from \cite[Theorem 1.1]{CLSZ21} that $M \setminus S$ cannot admit any complete metric with positive scalar curvature.

Orient $D$ such that the intersection number of $D$ and $\Sigma$ in $M$ is $1$. Let us consider the map $\pi_1(\partial D) \to \pi_1(M\setminus S)$ induced by the inclusion. We claim that the map $\pi_1(\partial D) \to \pi_1(M\setminus S)$ is injective. Otherwise,
an element $k[\partial D]$ for some $k \in \mathbb{Z}$ is mapped to zero in $\pi_1(M\setminus S)$. In other words, it bounds a disk $D'$ in $M\setminus S$. On the other hand, $k[\partial D]$ bounds a disk $D''$ in $\Sigma_\ve$ having intersection number $k$ with $\Sigma$.
Then $D' \cup D''$ is a sphere in $M$ which has intersection number $k$ with $\Sigma$. However, since $M$ is aspherical, we have $\pi_2(M) =0$. As a consequence, the sphere $D'\cup D''$ must be trivial in $H_2(M)$ and so $k =0$. Thus the map $\pi_1(\partial D) \to \pi_1(M\setminus S)$ is injective.

Now we consider the inclusion maps
$$i_1:\partial\Sigma_\ve\to M\setminus S\mbox{ and }i_2:M\setminus S\to M.$$
To see the injectivity of the map $\pi_1(\partial\Sigma_\ve)\to \pi_1(M\setminus S)$, we take an element $\gamma$ in $\pi_1(\partial\Sigma_\ve)$ such that $(i_1)_*\gamma=0$ in $\pi_1(M\setminus S)$, and try to show $\gamma=0$ in $\pi_1(\partial\Sigma_\ve)$. It is clear that we have $(i_2\circ i_1)_*\gamma=0$ in $\pi_1(M)$. Since $\Sigma$ is incompressible, we conclude that $\pi_*\gamma=0$ in $\pi_1(\Sigma)$, where $\pi:\partial\Sigma_\ve\to \Sigma$ is the projection map of the circle bundle. From the long exact sequence of homotopy groups for fibration, the sequence
$$\pi_1(\partial D)\to\pi_1(\partial\Sigma_\ve)\to \pi_1(\Sigma)$$
is exact. As a result, we can find an element $\xi$ in $\pi_1(\partial D)$ such that $i_*\xi=\gamma$ in $\pi_1(\partial\Sigma_\ve)$, where $i:\partial D\to \partial\Sigma_\ve$ is the inclusion map of the circle bundle. In particular, we have $(i_1)_*i_*\xi =0$ in $\pi_1(M\setminus S)$. Then we can conclude from the injectivity of the map $\pi_1(\partial D)\to\pi_1(M\setminus S)$ that $\xi=0$ in $\pi_1(\partial D)$ and so $\gamma=0$ in $\pi_1(\partial\Sigma_\ve)$.

Suppose that $\Sigma$ is not orientable. Denote the orientable cover of $\Sigma$ by $\check\Sigma$. Then we can find a cover $(\check M,\check S)$ of the pair $(M,S)$ such that $\pi_1(\check M)=\pi_1(\check\Sigma)$. In particular, the subset $\check S$ contains $\check\Sigma$ as a component. Similarly, we can show $\pi_1(\partial \check\Sigma_\ve) \to \pi_1(\check M \setminus \check S)$ is injective. This implies that $\check M\setminus\check S$ admits no complete metric with positive scalar curvature and so does $M\setminus S$.

\end{proof}


\begin{thebibliography}{10}

\bibitem{A13}
Agol, I. {\em The virtual Haken conjecture}, Doc. Math. {\bf 18} (2013), 1045--1087.

\bibitem{CG71} Cheeger, J., Gromoll, D. {\em The splitting theorem for manifolds of nonnegative Ricci curvature}, J. Differential Geom. {\bf 6} (1971), 119--128.

\bibitem{CLSZ21} Chen, J., Liu, P., Shi, Y., Zhu, J. {\em Incompressible hypersurface, positive scalar curvature and positive mass theorem}, preprint, arXiv: 2112.14442.

\bibitem{Chen24} Chen, S. {\em A generalization of the Geroch conjecture with arbitrary ends}, Math. Ann. {\bf 389} (2024), 489--513.

\bibitem{CCZ23} Chen, S., Chu, J., Zhu, J. {\em Positive scalar curvature metrics and aspherical summands}, preprint, arXiv: 2312.04698.

\bibitem{CL20} Chodosh, O., Li, C. {\em Generalized soap bubbles and the topology of manifolds with positive scalar curvature}, Ann. of Math. (2) {\bf 199} (2024), no. 2, 707--740.

\bibitem{CLL23} Chodosh, O., Li, C., Liokumovich, Y. {\em Classifying sufficiently connected PSC manifolds in $4$ and $5$ dimensions}, Geom. Topol. {\bf 27} (2023), no. 4, 1635--1655.

\bibitem{Gro86} Gromov, M. {\em Large Riemannian manifolds}, Curvature and topology of Riemannian manifolds (Katata, 1985), 108--121. Lecture Notes in Math., 1201 Springer-Verlag, Berlin, 1986.

\bibitem{Gro19} Gromov, M. {\em Four Lectures on Scalar Curvature}, preprint, arXiv: 1908.10612.

\bibitem{Gro20} Gromov, M. {\em No metrics with positive scalar curvatures on aspherical $5$-manifolds}, preprint, arXiv: 2009.05332.

\bibitem{GL83} Gromov, M., Lawson, H. B. Jr. {\em Positive scalar curvature and the Dirac operator on complete Riemannian manifolds}, Inst. Hautes \'{E}tudes Sci. Publ. Math. (1983), no. 58, 83--196.

\bibitem{Kaz82} Kazdan, J. L. {\em Deformation to positive scalar curvature on complete manifolds}, Math. Ann. {\bf 261} (1982), no. 2, 227--234.

\bibitem{LUY20} Lesourd, M., Unger, R., Yau, S.-T. {\em Positive scalar curvature on noncompact manifolds and the Liouville theorem}, Comm. Anal. Geom. {\bf 32} (2024), no. 5, 1311--1337.

\bibitem{LM19}
Li, C., Mantoulidis, C. {\em Positive scalar curvature with skeleton singularities}, Math. Ann. {\bf 374} (2019), no. 1-2, 99--131.

\bibitem{Luc08} L\"uck, W. {\em Survey on aspherical manifolds}, European Congress of Mathematics, 53--82, Eur. Math. Soc., Z\"{u}rich, 2010.

\bibitem{RT05} Ratcliffe, J. G., Tschantz, S. T. {\em Some examples of aspherical $4$-manifolds that are homology 4-spheres}, Topology {\bf 44} (2005), no. 2, 341--350.

\bibitem{Ros84} Rosenberg, J. {\em $C^*$-algebras, positive scalar curvature, and the Novikov conjecture}, Inst. Hautes \'Etudes Sci. Publ. Math. {\bf 58} (1983), 197--212.

\bibitem{SY79c} Schoen, R., Yau, S.-T. {\em Existence of incompressible minimal surfaces and the topology of three-dimensional manifolds with nonnegative scalar curvature},
Ann. of Math. (2) {\bf 110} (1979), no. 1, 127--142.

\bibitem{SY87} Schoen, R., Yau, S.-T. {\em The structure of manifolds with positive scalar curvature}, Directions in partial differential equations (Madison, WI, 1985), 235--242. Publ. Math. Res. Center Univ. Wisconsin, 54 Academic Press, Inc., Boston, MA, 1987.

\bibitem{SWWZ24} Shi, Y., Wang, J., Wu, R., Zhu, J. {\em On open manifolds admitting no complete metric with positive scalar curvature}, preprint, arXiv: 2404.01660.

\bibitem{Wang19} Wang, J. {\em Contractible $3$-manifolds and positive scalar curvature}, Ph.D. thesis, Universit\'e Grenoble Alpes, 2019.

\end{thebibliography}
\end{document}